\documentclass{article} 
\usepackage[utf8]{inputenc}
\usepackage{mathtools}
\usepackage{amstext}
\usepackage{amsfonts}
\usepackage{amssymb}
\usepackage{amsthm}
\usepackage{amsmath}
\usepackage{url}
\usepackage{enumitem}
\usepackage{cases}
\usepackage{verbatim}

\usepackage[backend=biber]{biblatex}
\newcommand{\mn}[0]{\medskip\noindent}
\newcommand{\pr}[1][]{\mathbb{P}}
\newcommand{\sm}[0]{\setminus}

\def\E{{\mathbb E}}
\def\P{{\mathbb P}}
\def\d{{\mathrm d}}
\newtheorem{thm}{Theorem}[section]

\newtheorem{cor}[thm]{Corollary}

\newtheorem{prop}[thm]{Proposition}
\newtheorem{conj}{Conjecture}

\newtheorem{case}{Case}

\addbibresource{Matchingbib.bib}

\title{Structure of the largest subgraphs of $G_{n,p}$ with a given matching number}
\author{Abigail Raz\thanks{Department of Mathematics, Rutgers University, Piscataway NJ. Email: ajr224@math.rutgers.edu}}
\date{}
\begin{document}
\maketitle
\begin{abstract}
This paper examines the structure of the largest subgraphs of the Erd\H{o}s-R{\'e}nyi random graph, $G_{n,p}$, with a given matching number. This extends a result of Erd\H{o}s and Gallai who, in 1959, gave a classification of the structures of the largest subgraphs of $K_n$ with a given matching number. We show that their result extends to $G_{n,p}$ with high probability when $p\ge \frac{8 \ln n}{n}$ or $p \ll \frac{1}{n}$, but that it does not extend (again with high probability) when $\frac{4\ln(2e)}{n} < p< \frac{\ln n}{3n}$. 
\end{abstract}
\section{Introduction}
Recall that for a graph $G$ the matching number (size of a largest matching) is denoted $\nu(G)$. In what follows the \emph{size} of a graph is the number of edges. In 1959 Erd{\H o}s and Gallai \cite{EG} proved the following theorem on the size of the largest subgraphs of $K_n$ with a given matching number:
\begin{thm}\cite[Theorem 4.1]{EG}\label{EG}
Each largest subgraphs of $K_n$ with matching number $k$ has one of the following forms.
\begin{enumerate}[label=(\alph*)]
\item all edges within a fixed set of vertices of size $2k+1 $;
\item all edges meeting a fixed set of vertices of size $k$. 
\end{enumerate}
\end{thm}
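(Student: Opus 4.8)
The plan is to prove the sharp edge bound $e(G)\le\max\{\binom{2k+1}{2},\,\binom{k}{2}+k(n-k)\}$ for every graph $G$ on $n\ge 2k+1$ vertices with $\nu(G)=k$, and then read off the two extremal graphs from the equality case. (The hypothesis $n\ge 2k+1$ is tacit in the statement: when $n=2k$ the true maximizer is $K_{2k}$, which is of neither form.) The main tool is the Tutte--Berge formula: since $\nu(G)=k$ there is a set $S\subseteq V(G)$ with $o(G-S)-|S|=n-2k$, where $o(\cdot)$ is the number of odd components. Writing $s=|S|$ and $q=o(G-S)=s+n-2k$, the trivial bound $o(G-S)\le |V(G)|-|S|$ forces $s\le k$.

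Next I would count edges block by block: $e(G)\le\binom{s}{2}+s(n-s)+\sum_{C}\binom{|C|}{2}$, the sum over components $C$ of $G-S$. As $G-S$ has at least $q$ components spanning $n-s$ vertices, convexity of $x\mapsto\binom{x}{2}$ (concretely, $\binom{a}{2}+\binom{b}{2}\le\binom{a+b-1}{2}$, strict unless $\min\{a,b\}=1$) shows $\sum_C\binom{|C|}{2}$ is largest with one component of order $n-s-(q-1)=2(k-s)+1$ and the other $q-1$ isolated; hence $e(G)\le g(s):=\binom{s}{2}+s(n-s)+\binom{2(k-s)+1}{2}$. A one-line computation shows $g$ is a strictly convex quadratic in $s$ (coefficient of $s^{2}$ equal to $\tfrac32$), so on $s\in\{0,1,\dots,k\}$ its maximum is attained only at the endpoints, with $g(0)=\binom{2k+1}{2}$ and $g(k)=\binom{k}{2}+k(n-k)$. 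The graphs $K_{2k+1}\sqcup\overline{K_{n-2k-1}}$ (all edges within a fixed $(2k+1)$-set) and $K_k\vee\overline{K_{n-k}}$ (all edges meeting a fixed $k$-set) show the bound is tight.

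For the classification, assume $e(G)$ equals the maximum $M$. Then the chain $M=e(G)\le g(s)\le M$ forces $g(s)=M$, hence $s\in\{0,k\}$ by strict convexity, and forces $e(G)=g(s)$, hence every block estimate is tight: $G[S]$ is complete, all $S$--$(V\setminus S)$ edges are present, every component of $G-S$ is a clique, and (tightness of the convexity step) the component orders of $G-S$ are exactly $(2(k-s)+1,1,\dots,1)$. If $s=0$ this says $G=K_{2k+1}\sqcup\overline{K_{n-2k-1}}$, i.e.\ form (a); if $s=k$ then $q=n-k$ makes every component of $G-S$ a single vertex, so $G=K_k\vee\overline{K_{n-k}}$, i.e.\ form (b). When the two bounds coincide ($n=(5k+3)/2$) both forms occur, consistent with the statement.

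The inequality is essentially a convexity computation once the Tutte--Berge set $S$ is in hand, so I expect the real work to be the equality analysis: one must check that \emph{simultaneous} tightness of all the estimates — the per-block edge counts, the concentration of the component orders into one odd clique, and the strict convexity of $g$ — leaves no room for any hybrid configuration, and one must keep the degenerate range $n\le 2k$ out of scope. A more self-contained alternative avoids Tutte--Berge and inducts on $k$: take a maximum matching and use the nonexistence of augmenting paths to argue that an edge-maximal graph with $\nu=k$ is a join $K_s\vee H$ with $H$ a disjoint union of odd cliques, then optimize exactly as above; there the obstacle shifts to ruling out \emph{long} augmenting paths, which is precisely what the Tutte--Berge formula (equivalently the Gallai--Edmonds decomposition) packages for free.
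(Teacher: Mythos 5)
The paper does not prove this statement: it is Theorem~4.1 of Erd\H{o}s and Gallai (1959), cited as background, so there is no internal proof to compare against. Evaluated on its own, your argument is correct and is essentially the standard modern proof via the Tutte--Berge formula. The chain of inequalities $e(G)\le\binom{s}{2}+s(n-s)+\sum_C\binom{|C|}{2}\le g(s)$ is sound, your use of $\binom{a}{2}+\binom{b}{2}\le\binom{a+b-1}{2}$ (strict unless $\min\{a,b\}=1$) to collapse the components of $G-S$ into one odd clique of order $2(k-s)+1$ plus singletons is exactly right, and the coefficient-$\tfrac32$ computation establishes strict convexity of $g$, so the maximum over $s\in\{0,\dots,k\}$ (the range forced by $o(G-S)\le n-|S|$) is attained only at $s=0$ or $s=k$. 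The equality analysis is done carefully, including the $g(0)=g(k)$ degenerate case, and your caveat that $n\ge 2k+1$ is a tacit hypothesis is correct --- when $n=2k$ the unique maximizer $K_{2k}$ is of neither form, and indeed form~(a) cannot even be instantiated.

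One point worth flagging, since the original 1959 argument predates the Tutte--Berge formula in this packaged form: Erd\H{o}s and Gallai argued more directly by induction and by manipulating alternating/augmenting paths, which is the alternative you sketch in your last paragraph. The Tutte--Berge route you chose buys a much shorter and cleaner proof at the cost of invoking a nontrivial structural theorem as a black box; this trade-off is the standard one in modern expositions and is perfectly appropriate here. As a small stylistic matter, you should state explicitly at the outset that the two extremal candidates $K_{2k+1}\sqcup\overline{K_{n-2k-1}}$ and $K_k\vee\overline{K_{n-k}}$ have matching number exactly $k$ when $n\ge 2k+1$ (each takes one line), since you use their feasibility both to establish tightness of the bound and, implicitly, in the equality analysis.
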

Erd{\H o}s conjectured that this result can be extended from graphs to $l$-uniform hypergraphs for all $l$. 
\begin{conj}(Erd{\H o}s' Matching Conjecture)
The largest subhypergraphs of $\mathcal{K}= {[n] \choose l}$ with matching number $k$ have size $$\max \left\{ \binom{l(k+1)-1}{l}, \binom{n}{l}- \binom{n-k}{l}\right\}.$$
\end{conj}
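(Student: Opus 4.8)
\mn\textbf{Proof proposal.} The plan is to attack the conjecture through \emph{compression} (shifting) and then to analyze the resulting structured family. Recall the shift $S_{ij}$ (for $i<j$): replace each $A\in\mathcal F$ with $j\in A$, $i\notin A$, $(A\sm\{j\})\cup\{i\}\notin\mathcal F$ by $(A\sm\{j\})\cup\{i\}$. One checks that $S_{ij}$ preserves $|\mathcal F|$ and does not increase $\nu(\mathcal F)$, so we may assume $\mathcal F\subseteq\binom{[n]}{l}$ is \emph{left-compressed}. First I would record the structural consequence: a left-compressed $\mathcal F$ with $\nu(\mathcal F)=k$ that contains \emph{any} matching of size $k$ contains the canonical one $E_t=\{(t-1)l+1,\dots,tl\}$, $t=1,\dots,k$; and if $\nu(\mathcal F)\le k$ then every edge must meet $[kl]$, since an edge disjoint from $E_1\cup\dots\cup E_k$ would sit inside $\{kl+1,\dots,n\}$ and, being leftmost after compression, could be slid down to extend the canonical matching. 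This already yields $|\mathcal F|\le\binom nl-\binom{n-kl}l$; the substance of the conjecture is that the transversal can be forced down from size $kl$ to size $k$ (the ``cover'' extremal family) or else $\mathcal F$ is confined to $[l(k+1)-1]$ (the ``clique'' extremal family).

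Second, I would split by the size of $n$. For $n$ very large relative to $k$ (Erd{\H o}s' original range $n\ge n_0(k,l)$) there is a clean deletion/averaging argument: if $|\mathcal F|>\binom nl-\binom{n-k}l$, a random $k$-set fails to be a transversal even after deleting the few edges contained in any fixed $l(k+1)-1$ vertices, and one extracts $k+1$ pairwise disjoint edges directly. For $n$ linear in $k$ but still sizeable, say $n\ge(2k+1)l$, one uses Frankl's refinement of the compression argument: partition $\mathcal F$ according to whether an edge lies inside $[l(k+1)-1]$, and bound the outside part by induction on $k$ together with the Kruskal--Katona theorem, controlling the link hypergraphs $\mathcal F(v)=\{A\sm\{v\}:v\in A\in\mathcal F\}$. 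For $n\le l(k+1)-1$ the statement is trivial: $\nu$ is automatically at most $k$ and $\binom nl\le\binom{l(k+1)-1}l$. The base case $l=2$ is exactly Theorem~\ref{EG}, which could also seed an induction on $l$ via links.

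The main obstacle is the \emph{middle range}, roughly $l(k+1)\le n\lesssim(2k+1)l$, which straddles the value of $n$ where the clique bound and the cover bound cross; here neither construction is evidently dominant and the compression inequalities become too lossy. To my knowledge the conjecture is open in this regime for general $l$ (it is known for $l=2$, i.e.\ Theorem~\ref{EG}, and in progressively wider ranges of $n$ for $l=3$ and beyond). Breaking it would seem to demand either a genuinely sharper analysis of left-compressed families near the matching threshold --- tracking all the links $\mathcal F(v)$ simultaneously rather than one at a time --- or a stability approach: prove that a near-extremal $\mathcal F$ must be structurally close to the clique or the cover family, then eliminate the non-canonical near-extremal possibilities by an \emph{ad hoc} argument. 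The fractional relaxation (bounding $|\mathcal F|$ in terms of the fractional matching number $\nu^*$) is more amenable and gives asymptotically correct bounds, but rounding back to the integral problem in the middle range is itself the crux. So the honest proposal is: compress, dispose of the large-$n$ and small-$n$ ranges as above, and treat the middle range as the open heart of the conjecture.
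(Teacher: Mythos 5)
The statement you are addressing is labeled a \emph{conjecture} in the paper, and the paper does not prove it: the surrounding text only records that the case $l=2$ is Theorem~\ref{EG} and that the cases $l=3$ and ``$k$ not too close to $n/l$'' are known, with citations. There is therefore no proof of the paper's to compare against, and you correctly do not claim to supply one either; your text is a survey of the compression approach together with an honest statement that the crux --- the middle range of $n$ around the crossover between the clique bound $\binom{l(k+1)-1}{l}$ and the cover bound $\binom{n}{l}-\binom{n-k}{l}$ --- remains open. That assessment is accurate, and your outline of the known territory (shifting to a left-compressed family, deducing that every edge meets $[kl]$ and hence $|\mathcal F|\le\binom{n}{l}-\binom{n-kl}{l}$, Frankl's refinements via links and Kruskal--Katona for larger $n$, and the triviality for $n\le l(k+1)-1$) is the standard picture. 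Two small cautions: the assertion that a left-compressed family with a matching of size $k$ must contain the canonical matching $E_1,\dots,E_k$ is a nontrivial lemma of Frankl, not an immediate consequence of the definition of shiftedness, and should be cited rather than stated as obvious; and the ``clean deletion/averaging argument'' you gesture at for very large $n$ hides the bulk of Erd{\H o}s' original work, so as written it is a pointer rather than a step. None of this changes the conclusion: since the statement is an open conjecture, a complete proof is not available, and your proposal is best read as a correct description of why.
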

Note that these bounds are achieved by hypergraphs of the following forms:
\begin{enumerate}[label=(\alph*)]
\item all hyperedges within a fixed set of vertices of size $l(k+1)-1 $;
\item all hyperedges meeting a fixed set of vertices of size $k$.
\end{enumerate}
The case $l=2$ is Theorem \ref{EG}. The conjecture has also been proved for $l=3$ \cite{F1,F2,LM}, and when $k$ is not too close to $n/l$ \cite{F1,HLS}.
Note that as $k$ changes the optimal configuration shifts between the two forms. 

Here we show that Theorem \ref{EG} extends to $G_{n,p}$ for most values of $p=p(n)$. Let us say a graph $G$ has the \emph{EG Property} if for each $k\le \nu(G)$ every largest subgraph of $G$ with matching number $k$ has one of the two forms above, which we repeat for reference:
\begin{align}
    &\text{all edges within a fixed set of vertices of size }2k+1;  \label{form1}\\
    &\text{all edges meeting a fixed set of vertices of size }k. \label{form2}
\end{align}

\begin{thm} \label{main}
If $p \ge \frac{8\log n}{n}$, then with high probability\footnote{With high probability (``w.h.p.") means with probability tending to 1 as $n \rightarrow \infty$} $G_{n,p}$ has the EG Property. 
\end{thm}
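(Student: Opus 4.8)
The plan is to split off a deterministic ``pseudorandomness'' statement so that Theorem~\ref{main} reduces to checking that $G_{n,p}$ w.h.p.\ satisfies a short list of conditions. For an $n$-vertex graph $G$ these conditions are, roughly: (a) $\nu(G)=\lfloor n/2\rfloor$; (b) sharp concentration of $e(G)$ together with fine control of the whole degree sequence $d_{(1)}\ge\cdots\ge d_{(n)}$ --- the typical value $\approx np$, the extreme order statistics, and all partial sums $\sum_{i\le j}d_{(i)}$; (c) for every vertex set $X$ the induced edge count $e(G[X])$ is $(1+o(1))\binom{|X|}{2}p$ when $|X|$ is linear in $n$ and only $O(|X|)$ when $|X|$ is small --- in particular $G$ is $K_4$-free; (d) strong expansion, to the effect that the cheapest way to delete edges so as to split $G$ into several pieces, equivalently to drop its matching number below $\lfloor n/2\rfloor-j$, is to isolate the $2j-1$ vertices of smallest degree, and essentially uniquely so. Each of (a)--(d) is a routine Chernoff/first-moment/union-bound computation, and I expect the constant $8$ to be forced both by those union bounds and, crucially, by the deterministic step below.

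For the deterministic core, fix $k\le\nu(G)$, let $H\subseteq G$ be edge-maximal with $\nu(H)=k$, and assume $H$ has no isolated vertices. Two observations frame the argument: if $H$ has a vertex cover of size $k$ then edge-maximality forces $H$ to be exactly of form \eqref{form2}; if $V(H)$ has size at most $2k+1$ then edge-maximality forces $H$ to be exactly of form \eqref{form1}. So it suffices to show that in every other case $e(H)<\max\{\mu_G(k),\,\eta^+_G(2k+1)\}$, where $\mu_G(b)$ is the largest number of edges of $G$ meeting a $b$-set, $\eta^+_G(s)$ is the largest number of edges of $G$ inside an $s$-set, and these two quantities are exactly the sizes of the best configurations of forms \eqref{form2} and \eqref{form1}. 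Take the Gallai--Edmonds decomposition $V(H)=C\cup A\cup D$: $H[C]$ has a perfect matching, the components $D_i$ of $H[D]$ are factor-critical, $A=N(D)\setminus D$, there are no $C$--$D$ edges, and $\tfrac12|C|+|A|+\tfrac12(|D|-\#\{D_i\})=k$. Writing $b=|A|$, every edge of $H$ meets $A$ or lies inside $C$ or inside some $D_i$, so $e(H)\le\mu_G(b)+e(H[C])+\sum_i e(H[D_i])$.

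If the non-trivial pieces ($H[C]$ and the $D_i$ with $|D_i|\ge3$) are ``consolidated'' into essentially one clique-like set of size at most $2(k-b)+1$, this bound reads $e(H)\le\mu_G(b)+\eta^+_G(2(k-b)+1)$, whose right-hand side equals form \eqref{form1} at $b=0$ and form \eqref{form2} at $b=k$; by (b) and (c) the map $b\mapsto\mu_G(b)+\eta^+_G(2(k-b)+1)$ is strictly convex with curvature of order $p$ (a concave piece of curvature $\approx-p$ plus a convex piece of curvature $\approx4p$), hence is maximised only at $b\in\{0,k\}$, and its value there decides which form wins --- the crossover occurring at $k\approx\tfrac25 n$, exactly as for $K_n$. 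If instead the pieces are spread out (many small $D_i$), they carry only $O(k)$ extra edges in total, and by (b)--(d) this is overwhelmed by what is lost from using a cover of size $b<k$. Either way $e(H)\le\max\{\mu_G(k),\eta^+_G(2k+1)\}$, and equality forces exactly form \eqref{form1} or \eqref{form2}; a union bound over the $\le n/2$ values of $k$ then finishes the proof.

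The main obstacle, and where $p\ge 8\log n/n$ is genuinely used, is the analysis near the endpoints $b\in\{0,k\}$ and near the crossover $k\approx\tfrac25 n$. There the convexity gap is only of order $p$ (for bounded $b$ or $k-b$) while the estimates of $\mu_G$ and $\eta^+_G$ carry $o(n^2p)$ fluctuations, so the soft argument collapses and one must instead compare the hybrid configurations (a small cover together with a clique) to the two canonical ones directly. The inequality that must hold is, in essence, $\mu_G(b)\le 4kbp$ for $k\ge\tfrac25 n$, i.e.\ roughly $\Delta(G)<\tfrac85\,np$, and this is exactly what $p\ge 8\log n/n$ provides; for smaller $p$ the maximum degree is a larger multiple of $np$ and such a hybrid can beat both \eqref{form1} and \eqref{form2}, which is the mechanism behind the counterexamples in the complementary range. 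Carrying this out rigorously uses the fine facts in (b)--(d): the extreme order statistics of the degree sequence, the increments $\eta^+_G(s)-\eta^+_G(s-2)$, the exact minimum edge-cuts of $G$, and $K_4$-freeness.
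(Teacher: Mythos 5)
Your plan is to reduce the theorem to a short list of deterministic pseudorandomness conditions (a)--(d), prove those by union bound, and then run a Gallai--Edmonds consolidation/convexity argument. There are three serious problems with this.

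First, condition (d) is not a lemma --- it is essentially a case of the theorem. Saying ``the cheapest way to drop $\nu(G)$ below $\lfloor n/2\rfloor-j$ is to isolate the $2j-1$ lowest-degree vertices, essentially uniquely'' is precisely the statement that, for $k$ near $n/2$, the largest subgraph with matching number $k$ has form~\eqref{form1}, together with the uniqueness of that optimum. Calling it ``a routine Chernoff/first-moment/union-bound computation'' and then invoking it in the deterministic core is circular. In fact the paper's whole Case~7 (which corresponds exactly to the regime you wave at in your final paragraph, $s$ small, $|A_1|$ near $2k+1$) exists because a deterministic reduction of this type does not work there: a union bound over all candidate configurations with small cover $S$ and a nearly-clique $A_1$ diverges, because the number of choices for the $b$-set $B$ is too large relative to the available concentration. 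The paper handles this by conditioning on $S$, revealing the edges of $G-S$ first to \emph{define} the optimal $A_1$, and only then revealing the edges meeting $S$, so that the union bound is over $k$ and $S$ alone and never over $B$. Your framework of ``check pseudorandomness once, then argue deterministically'' structurally cannot reach this case; the paper explicitly notes that merely assuming the w.h.p.\ statements of Section~\ref{prelim} and then reasoning further ``involves conditioning on these properties, and the resulting probability distribution is not one we are likely to understand.''

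Second, the claim inside (c) that ``$G$ is $K_4$-free'' is false over most of the stated range of $p$: the theorem allows every $p\ge 8\log n/n$, and already for $p\gg n^{-2/3}$ the graph $G_{n,p}$ contains many copies of $K_4$ w.h.p. Whatever role $K_4$-freeness was meant to play, it cannot be used.

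Third, the ``consolidation'' step --- replacing the pieces $H[C]$ and the factor-critical components $D_i$ by a single near-clique of size $\le 2(k-b)+1$ and claiming $e(H)\le\mu_G(b)+\eta^+_G(2(k-b)+1)$ --- is not justified, and it is exactly where most of the work lives. The paper spends Cases~1, 2, 4 and 5 showing that spread-out configurations with many small or medium odd components can be strictly improved, and this uses the density estimates of Propositions~\ref{int}--\ref{prop4} in a delicate way (for example, a non-singleton component $A_j$ of size anywhere between $3$ and $\Theta(\log n/p)$ requires Proposition~\ref{bigint} rather than the first-moment bound of Proposition~\ref{int}). Your sketch treats this as an afterthought (``they carry only $O(k)$ extra edges in total''), but one cannot conclude that consolidation beats spreading without doing that case analysis, since the consolidation move also forfeits edges (it shrinks $\mu_G(\cdot)$). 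The convexity heuristic for $b\mapsto\mu_G(b)+\eta^+_G(2(k-b)+1)$ is plausible in the bulk, but near the endpoints and near the crossover the curvature is $O(p)$ while your error terms from (b)--(c) are not visibly smaller, which you acknowledge; the argument you then propose to ``compare the hybrid configurations to the two canonical ones directly'' is again a version of (d) and again circular. So the proposal, as written, does not constitute a proof, and its central mechanism is disjoint from the one the paper actually uses (Tutte--Berge partitions $V=S\cup A_1\cup\cdots\cup A_d$, seven-way case analysis, and a conditioning trick in the hard case).
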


The rest of this paper is organized as follows. Section \ref{prelim} covers basic definitions and terminology, and establishes edge density preliminaries. In Section~\ref{mainpf} we prove Theorem \ref{main}. We conclude in Section \ref{final} with some discussion of what happens when $p$ is not in the range covered by Theorem \ref{main}.

\section{Preliminaries} \label{prelim}
We use $N_A(x)$ for the set of neighbors of $x$ in $A$, and $d_A(x) = |N_A(x)|$, the degree of $x$ in $A$. Also, for $V= V(G)$, $N(x)=N_V(x)$ and $d(x)=d_V(x)$. For a graph $G$ and $X,Y \subseteq V(G)$ we use $ \nabla(X,Y)$ for the set of edges of $G$ joining $X$ and $Y$, and $E(X)$ for the set of edges contained in $X$.

We write $a= (1\pm \epsilon) b$ for $(1-\epsilon)b< a < (1+\epsilon)b$ and write $a\neq (1\pm \epsilon) b$ when this is not the case. 
We use $B(m, \alpha)$ for a random variable with the binomial distribution Bin$(m, \alpha)$ and $\log$ for $\ln$.

Set $$\varphi(x)=(1+x)\log (1+x)-x$$
for $x >-1$ and (for continuity) $\varphi(-1)=1$.
We will use the following form of Chernoff's inequality, which may be found, for example in \cite[Theorem 2.1]{JLR}

\begin{thm}\label{cher}
If $X \sim $ Bin$(m,q)$ and $\mu =\E[X]= mq$ then for $ \lambda \ge 0$ we have 
\begin{align} 
    \pr(X > \mu + \lambda) & \le \exp[-\mu \varphi (\lambda/\mu)]\le \exp \left[-\frac{\lambda^2}{2(\mu + \lambda/3)}\right] \label{cher1} \\
    \pr(X < \mu - \lambda) &\le \exp[-\mu \varphi (-\lambda/\mu)] \le \exp \left[- \frac{\lambda^2}{2\mu}\right]
\end{align}

\end{thm}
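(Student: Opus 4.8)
Since the statement to be established is Chernoff's inequality, which the text itself attributes to \cite[Theorem 2.1]{JLR}, I expect no genuine obstacle; I record the standard exponential-moment (Bernstein--Chernoff) argument for completeness. For the upper tail, fix $t>0$ and apply Markov's inequality to the nonnegative random variable $e^{tX}$:
\[
\pr(X>\mu+\lambda)\le \pr\!\big(e^{tX}\ge e^{t(\mu+\lambda)}\big)\le e^{-t(\mu+\lambda)}\,\E[e^{tX}].
\]
Writing $X$ as a sum of $m$ independent Bernoulli$(q)$ variables gives $\E[e^{tX}]=(1-q+qe^t)^m=(1+q(e^t-1))^m$, and $1+x\le e^x$ yields $\E[e^{tX}]\le \exp(mq(e^t-1))=\exp(\mu(e^t-1))$. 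Hence $\pr(X>\mu+\lambda)\le\exp(-t(\mu+\lambda)+\mu(e^t-1))$. The exponent is convex in $t$ and minimized where $e^t=1+\lambda/\mu$, i.e.\ at $t=\log(1+\lambda/\mu)$, which is admissible precisely because $\lambda\ge 0$. Substituting and simplifying, the exponent equals $\lambda-(\mu+\lambda)\log(1+\lambda/\mu)=-\mu\varphi(\lambda/\mu)$, which is the first bound.

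For the lower tail I would run the mirror computation with $t>0$ applied to $e^{-tX}$: Markov gives $\pr(X<\mu-\lambda)\le e^{t(\mu-\lambda)}\,\E[e^{-tX}]\le \exp(t(\mu-\lambda)+\mu(e^{-t}-1))$, using the same product formula for the moment generating function together with $1+x\le e^x$. When $0\le\lambda<\mu$ the exponent is convex in $t$ and minimized at $t=-\log(1-\lambda/\mu)>0$, and substituting gives exponent $-\mu\varphi(-\lambda/\mu)$. The boundary case $\lambda=\mu$ is exactly the convention $\varphi(-1)=1$, since there $\pr(X<0)=0\le\pr(X\le 0)=(1-q)^m\le e^{-\mu}$; and for $\lambda>\mu$ the left side is $0$, so the bound holds in all relevant cases.

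It remains to deduce the cruder polynomial estimate in each line, which uses only two elementary facts about $\varphi$ and no further probability: $\varphi(x)\ge \dfrac{x^2}{2(1+x/3)}$ for $x\ge 0$, and $\varphi(-x)\ge \dfrac{x^2}{2}$ for $0\le x\le 1$. The second is immediate: both sides vanish at $x=0$ and $\tfrac{d}{dx}\varphi(-x)=-\varphi'(-x)=-\log(1-x)\ge x$, so $\varphi(-x)-x^2/2$ is nondecreasing from $0$. The first is the one place a short computation is needed: one checks that $h(x):=\varphi(x)-\tfrac{x^2}{2(1+x/3)}$ has $h(0)=h'(0)=0$ and $h''\ge 0$ on $[0,\infty)$ (clearing denominators reduces $h''\ge 0$ to a polynomial inequality), so $h\ge 0$. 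Plugging $x=\lambda/\mu$ into these two inequalities turns $-\mu\varphi(\lambda/\mu)$ and $-\mu\varphi(-\lambda/\mu)$ into $-\lambda^2/\big(2(\mu+\lambda/3)\big)$ and $-\lambda^2/(2\mu)$ respectively, completing the proof. The only mildly technical point in the whole argument is this last convexity/derivative check; everything else is the routine optimization of the Chernoff exponent.
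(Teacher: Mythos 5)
Your proof is correct. The paper does not prove Theorem \ref{cher} at all --- it is imported as a black box from \cite[Theorem 2.1]{JLR} --- so there is no internal argument to compare against; your exponential-moment derivation is the standard one and all steps check out: the optimal $t=\log(1+\lambda/\mu)$ does give exponent $-\mu\varphi(\lambda/\mu)$, the mirror computation gives $-\mu\varphi(-\lambda/\mu)$, and the two calculus facts you invoke are verified easily (for $h(x)=\varphi(x)-\tfrac{x^2}{2(1+x/3)}$ one gets $h''(x)=\tfrac{1}{1+x}-\tfrac{27}{(3+x)^3}\ge 0$ since $(3+x)^3-27(1+x)=x^3+9x^2\ge 0$, and $-\log(1-x)\ge x$ handles the lower tail, with the boundary case $\lambda=\mu$ covered by the convention $\varphi(-1)=1$).
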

For larger deviations we use a consequence of the finer bound in (\ref{cher1}); see e.g. \cite[Theorem A.1.12]{AS}

\begin{thm} \label{ld}
For $X \sim B(m,q)$ with $\mu= \E[X]= mq$ and any $K$ we have 
\begin{align*}
\pr(X>Kmq) < \exp [-Kmq\log(K/e)].
\end{align*}
\end{thm}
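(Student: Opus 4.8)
The plan is to obtain Theorem~\ref{ld} as an immediate consequence of the sharper Chernoff estimate already recorded in \eqref{cher1}, namely $\pr(X > \mu + \lambda) \le \exp[-\mu\,\varphi(\lambda/\mu)]$, where $\mu = mq$ and $\varphi(x) = (1+x)\log(1+x) - x$. The asserted inequality only has content when $K$ is large enough that its right-hand side is below $1$, i.e.\ when $\log(K/e) > 0$, so I may as well assume $K > e$ (for $K \le 1$ one even has $Kmq \le \mu$, and in any case $\exp[-Kmq\log(K/e)] \ge 1$, so there is nothing to prove); likewise the case $mq = 0$ is trivial. So assume $K > 1$ and $mq > 0$, and apply \eqref{cher1} with $\lambda := (K-1)mq$, for which $\mu + \lambda = Kmq$ and $\lambda/\mu = K-1 > -1$.

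The single computation needed is the evaluation of the exponent. Using $(1+x)\log(1+x) - x = (1+x)\log(1+x) - (1+x) + 1$ with $1 + x = K$,
\begin{align*}
\mu\,\varphi(K-1) \;=\; mq\big[K\log K - K + 1\big] \;=\; Kmq\log(K/e) + mq .
\end{align*}
Substituting this into \eqref{cher1} gives $\pr(X > Kmq) \le \exp[-Kmq\log(K/e) - mq] < \exp[-Kmq\log(K/e)]$, the last inequality being strict because $mq > 0$. That completes the argument.

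There is no genuine obstacle here — the result is a one-line deduction once \eqref{cher1} is in hand — and the only points worth a remark are fixing the (uninteresting) range of $K$ and the elementary rearrangement of $\varphi$ above. If one wished to avoid invoking \eqref{cher1}, an equally short self-contained route runs through the crude union bound: for a positive integer $t$ one has $\pr(X \ge t) \le \binom{m}{t} q^t \le (mq)^t/t! \le (emq/t)^t$, using $t! \ge (t/e)^t$; since $t \mapsto (emq/t)^t$ is decreasing for $t > mq$, taking $t = \lceil Kmq\rceil \ge Kmq$ yields $\pr(X > Kmq) \le (emq/t)^t \le (e/K)^{Kmq} = \exp[-Kmq\log(K/e)]$, with strictness again coming either from the ceiling (when $Kmq \notin \mathbb{Z}$) or from $t! > (t/e)^t$.
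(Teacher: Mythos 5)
Your derivation is correct and is precisely the route the paper has in mind: the paper states Theorem~\ref{ld} is ``a consequence of the finer bound in (\ref{cher1})'' and then just cites Alon--Spencer, and your substitution $\lambda=(K-1)mq$ together with the identity $\mu\,\varphi(K-1)=mq(K\log K-K+1)=Kmq\log(K/e)+mq$ supplies exactly the missing one-line computation (the $+mq$ giving strictness), with the trivial range $K\le e$ and the degenerate case $mq=0$ handled sensibly. The alternative self-contained route via $\binom{m}{t}q^t\le(emq/t)^t$ is also fine, but it is not needed given that \eqref{cher1} is already stated.
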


For the rest of this section we set $G= G_{n,p}$ and assume that $p \ge \frac{8 \log n}{n}$. Some of the following statements hold in more generality, but this is not relevant for us. 

\begin{prop} \label{int}
For any $\epsilon>0$ for all $X \subseteq V(G)$ with $|X|>\epsilon n$ w.h.p.\
\begin{equation}\label{inteq1}
|E(X)| = (1 \pm \epsilon) {|X| \choose 2}p.
\end{equation}
Additionally, w.h.p.\ for all $X \subseteq V(G)$ with $|X| > \frac{\log n}{150 p}$ 
\begin{equation}\label{inteq2}
    |E(X)| \le 300 {|X| \choose 2}p.
\end{equation}

\end{prop}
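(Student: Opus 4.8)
The plan is to treat both assertions uniformly as ``a deviation estimate for a single vertex set, followed by a union bound over all eligible sets.'' The key structural fact is that for a \emph{fixed} $X\subseteq V(G)$ the edge count $|E(X)|$ is exactly a $\mathrm{Bin}\!\left(\binom{|X|}{2},p\right)$ random variable, since edges of $G_{n,p}$ appear independently; write $\mu(X)=\binom{|X|}{2}p$ for its mean. Everything then comes down to checking that the relevant tail bound, multiplied by the number of eligible sets, still tends to $0$, and the hypothesis $p\ge\frac{8\log n}{n}$ is exactly what makes $\mu(X)$ large enough for this.

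For \eqref{inteq1}, fix $\epsilon>0$. For $X$ with $|X|>\epsilon n$, the two inequalities of Theorem~\ref{cher} applied with $\lambda=\epsilon\mu(X)$ give $\P\!\left(|E(X)|\ne(1\pm\epsilon)\mu(X)\right)\le 2\exp[-c(\epsilon)\mu(X)]$ for a constant $c(\epsilon)>0$ (e.g.\ $c(\epsilon)=\tfrac{\epsilon^2}{2(1+\epsilon/3)}$ dominates both tails). Since $|X|>\epsilon n$ and $p\ge\tfrac{8\log n}{n}$ we have $\mu(X)\ge 2\epsilon^2 n\log n$ for $n$ large, so this probability is at most $2\exp[-2c(\epsilon)\epsilon^2 n\log n]$. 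There are fewer than $2^n$ eligible sets, and $2^{n+1}\exp[-2c(\epsilon)\epsilon^2 n\log n]\to 0$ because $\log n\to\infty$ overwhelms the constant $\log 2$; a union bound over $X$ finishes this part.

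For \eqref{inteq2}, set $m_0=\lceil\tfrac{\log n}{150p}\rceil$ and take $X$ with $|X|=m\ge m_0$. Apply Theorem~\ref{ld} with $K=300$ (so the role of ``$mq$'' is played by $\mu(X)=\binom{m}{2}p$): $\P\!\left(|E(X)|>300\binom{m}{2}p\right)<\exp[-300\log(300/e)\binom{m}{2}p]$. The threshold $m\ge m_0$ forces $mp\ge\tfrac{\log n}{150}$, hence (using $\binom{m}{2}\ge m^2/4$ for $m\ge2$) $\binom{m}{2}p\ge\tfrac{m}{4}\cdot mp\ge\tfrac{m\log n}{600}$; meanwhile there are $\binom{n}{m}\le (en/m)^m$ sets of size $m$, and $m\ge m_0\ge\tfrac{\log n}{150}$ gives $\log(en/m)\le\log n+O(1)$, so $\log\binom{n}{m}\le(1+o(1))m\log n$. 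Since $300\log(300/e)>1400$ while $2\cdot600=1200$, the failure probability summed over all sets of size $m$ is at most $\binom{n}{m}\exp[-300\log(300/e)\binom{m}{2}p]\le\exp[-(1+o(1))\cdot1.35\,m\log n]$. Summing over $m_0\le m\le n$, the total is dominated by the $m=m_0$ term, which is $\exp[-\Omega(\log^2 n)]\to0$ (using $m_0\ge\tfrac{\log n}{150}$); a final union bound over $m$ and over the sets of each size completes the proof.

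The only real point of care is the bookkeeping of constants: the values ``$300$'' and ``$\tfrac{\log n}{150p}$'' are calibrated precisely so that the large-deviation rate $K\log(K/e)\binom{m}{2}p$ beats the entropy $\log\binom{n}{m}$ uniformly over the whole range $m\ge\tfrac{\log n}{150p}$. Once the inequality $\binom{m}{2}p\gtrsim m\log n$ has been extracted from the size threshold, the remaining estimates are routine. (One could instead obtain the large-$|X|$ case of \eqref{inteq2} from \eqref{inteq1}, but running Theorem~\ref{ld} over the entire range at once is cleaner.)
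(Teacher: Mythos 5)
The proposal is correct and takes essentially the same route as the paper: apply Chernoff (for \eqref{inteq1}) or the large-deviation bound of Theorem~\ref{ld} (for \eqref{inteq2}) to a fixed set, then union bound, with the size thresholds calibrated so that $\mu(X)=\binom{|X|}{2}p$ beats the entropy. The only cosmetic difference is that you use a crude $2^n$ union bound for \eqref{inteq1} rather than organizing it size by size as the paper does; this is harmless since the exponent is $\Omega(n\log n)$ uniformly over the eligible sizes.
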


\begin{proof}
For (\ref{inteq1}), where we may of course assume $\epsilon$ is fairly small, we first observe that for any $X \subseteq V(G)$ of size $w$, Theorem \ref{cher} gives (say)
$$\pr \left(|E(X)| \neq (1\pm\epsilon){w \choose 2}p\right) \le \exp\left[-\frac{\epsilon^2}{3}{w \choose 2}p\right].$$ 
So, the probability that there is some $X$ of size $w>\epsilon n$ violating (\ref{inteq1}) is no more than 
\begin{align*}
{n \choose w}\exp\left[-\frac{\epsilon^2}{2}{w \choose 2}p\right] &\le \exp\left[w \left(\log(en/w) - \frac{\epsilon^2}{4} \left(w-1\right)p\right)\right]\\
&<\exp[-(2-o(1))\epsilon^3w\log n],
\end{align*}
and summing over $w >\epsilon n$ bounds the overall probability that (\ref{inteq1}) fails by 
\begin{align*}
\sum_{w}\exp[-(2-o(1))\epsilon^3w\log n]&= o(1).
\end{align*}


For (\ref{inteq2}), fix $X \subseteq V(G)$ of size $w >\frac{ \log n}{150 p}$. Theorem \ref{ld} gives
\begin{align*}
\pr\left(|E(X)| > 300{w \choose 2}p\right) &\le \exp\left[-300\log(300/e){w \choose 2}p\right]\\
&<\exp[-700w(w-1)p].
\end{align*}

So, the probability that there is some $X$ of size $w>\frac{\log n}{150p}$ violating (\ref{inteq2}) is no more than
\begin{align*}
  {n \choose w}\exp[-700w(w-1)p] &\le \exp\left[-3w\log n\right] = n^{-3w},
\end{align*}
and summing over $w >\frac{\log n}{150p}$ we have $$\sum_w n^{-3 w}=o(1).$$
\end{proof}

\begin{prop} \label{bigint}
W.h.p.\ for all $X\subseteq V(G)$ with $ |X| \le \frac{\log n}{150p}$ 
\begin{equation}\label{bigint1}
    |E(X)| \le \frac{|X|\log n}{3}.
\end{equation}
\end{prop}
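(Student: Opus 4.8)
The plan is to run the obvious union bound over vertex subsets, using the large‑deviation estimate of Theorem~\ref{ld} (rather than the quadratic Chernoff bound (\ref{cher1}), which is far too weak here, since our target is a fixed multiple of the relevant mean), exactly mirroring the proof of (\ref{inteq2}). The bound (\ref{bigint1}) is trivial when $|X|\le 1$, since then $E(X)=\emptyset$, so I may fix an integer $w$ with $2\le w\le \frac{\log n}{150p}$ and a set $X$ with $|X|=w$.

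First I would record the estimate on the mean. We have $|E(X)|\sim B\big(\binom{w}{2},p\big)$ with $\mu_w:=\E|E(X)|=\binom{w}{2}p$, and the defining constraint $wp\le \frac{\log n}{150}$ gives
\[
\mu_w\ \le\ \frac{w^2p}{2}\ \le\ \frac{w\log n}{300},
\]
so that the target threshold $T_w:=\frac{w\log n}{3}$ satisfies $T_w\ge 100\,\mu_w$. Applying Theorem~\ref{ld} with $K=T_w/\mu_w\ge 100$ then yields
\[
\pr\!\left(|E(X)|>\tfrac{w\log n}{3}\right)\ <\ \exp\!\left[-T_w\log(K/e)\right]\ \le\ \exp\!\left[-\tfrac{w\log n}{3}\log(100/e)\right]\ =\ n^{-cw},
\]
where $c=\tfrac13\log(100/e)=\tfrac13(\log 100-1)$.

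The one point that actually has to be checked — the only place the numbers matter — is that $c>1$; that is, the large‑deviation saving $\log(100/e)\approx 3.6$ beats, by a constant factor, the entropy cost $\log n$ of specifying $X$ (this is why the hypothesis carries the constant $150$). Granting $c>1$, a union bound over the $\binom{n}{w}\le n^{w}$ choices of $X$ of size $w$ gives
\[
\pr\!\left(\exists\, X,\ |X|=w,\ |E(X)|>\tfrac{w\log n}{3}\right)\ <\ n^{w}\cdot n^{-cw}\ =\ n^{-(c-1)w},
\]
and summing the geometric series $\sum_{w\ge 2}n^{-(c-1)w}=O\!\big(n^{-2(c-1)}\big)=o(1)$ over all admissible $w$ bounds the probability that (\ref{bigint1}) fails by $o(1)$. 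I do not anticipate any real obstacle: this is a one‑shot first‑moment argument, and the only subtlety is to invoke the refined bound of Theorem~\ref{ld} in place of (\ref{cher1}), since for these small sets $T_w$ is only a constant multiple of $\mu_w$.
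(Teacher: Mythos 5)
Your proposal is correct, and it follows essentially the same route the paper takes: apply the large-deviation bound (Theorem~\ref{ld}) to a set of size $w$, get a tail probability of the form $n^{-cw}$ with $c>1$, and beat the $\binom{n}{w}\le n^{w}$ entropy cost in a union bound. The one structural difference is how you handle small $w$: the paper first notes that for $w<\frac{2\log n}{3}$ the conclusion $|E(X)|\le\binom{w}{2}<\frac{w\log n}{3}$ is trivially true, and then runs the tail bound only for $w\in[\frac{2\log n}{3},\frac{\log n}{150p}]$; you instead observe that the constraint $wp\le\frac{\log n}{150}$ forces $K=T_w/\mu_w\ge 100$ \emph{uniformly} over $2\le w\le\frac{\log n}{150p}$, so the single large-deviation estimate covers the whole range with no trivial case to split off. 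That observation is correct and mildly streamlines the write-up. Your numerical accounting $c=\tfrac13(\log 100 -1)\approx 1.2>1$ is right and is precisely what the constant $150$ is there to guarantee; as it happens this is a sharper (and more honest) constant than the $3/2$ the paper quotes at the analogous step, though both yield the required $o(1)$ after the union bound and the geometric sum.
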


\begin{proof}
Note that if $|X|<\frac{2\log n}{3}$ then the statement is trivially true. Thus we now only consider $X\subseteq V(G)$ of size $w \in \left[\frac{2\log n}{3}, \frac{\log n}{150p}\right].$ On the other hand, for any such $X$ Theorem \ref{ld} gives 
\begin{align*}
\pr(|E(X)| \ge w \log n/3) &\le \exp\left[- (1/3)w\log n\log\left(\frac{2\log n}{3ewp}\right)\right]\\
&\le \exp\left[-(3/2)w\log n \right],
\end{align*}
where the final inequality holds since $w\le \frac{\log n}{150p}$.
So, the probability that there is some $X$ of size $w \in \left[\frac{2\log n}{3}, \frac{\log n}{150p}\right]$ violating (\ref{bigint1}) is no more than
\begin{equation*}
{n \choose w}\exp[-(3/2)w\log n]< n^{-w/2}, 
\end{equation*}
and summing over $w \in \left[\frac{2\log n}{3},\frac{\log n}{150p}\right]$  we have
$$\sum_w n^{-w/2} =o(1).$$
\end{proof}

\begin{prop} \label{between}
For any fixed $\epsilon>0$ w.h.p.\ 
\begin{equation}\label{betweeneq}
    |\nabla(Y,Z)| = (1\pm\epsilon) |Y||Z|p
\end{equation}
whenever $Y, Z \subseteq V(G)$ are disjoint and satisfy $|Y|>\epsilon n$ and 
and $|Z|>\frac{n}{\log^{1/2} n}$.
\end{prop}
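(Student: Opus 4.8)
The plan is to run a first–moment / union–bound argument over all eligible pairs $(Y,Z)$, exactly in the spirit of the proof of Proposition~\ref{int}; the only real thing to check is that the hypothesis $|Z|>n/\log^{1/2}n$ is strong enough that the single–pair deviation bound overwhelms the number of pairs.

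First I would fix a small $\epsilon>0$ and a disjoint pair $Y,Z\subseteq V(G)$ with $|Y|=y>\epsilon n$ and $|Z|=z>n/\log^{1/2}n$. Since $Y$ and $Z$ are disjoint, $|\nabla(Y,Z)|\sim\mathrm{Bin}(yz,p)$ with mean $\mu=yzp$, and Theorem~\ref{cher} gives — just as in the proof of Proposition~\ref{int}, absorbing a factor of $2$ since $\mu\to\infty$ —
$$\pr\big(|\nabla(Y,Z)|\neq(1\pm\epsilon)yzp\big)\le\exp\big[-\tfrac{\epsilon^2}{3}yzp\big].$$
Feeding in $yp\ge\epsilon n\cdot\tfrac{8\log n}{n}=8\epsilon\log n$ then bounds the exponent by $-\tfrac{8\epsilon^3}{3}z\log n$.

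Then I would union over pairs. For fixed sizes there are at most $\binom{n}{y}\binom{n}{z}\le 2^{n}\exp[z\log(en/z)]$ disjoint pairs, and $z>n/\log^{1/2}n$ gives $\log(en/z)<1+\tfrac12\log\log n$; hence the chance that some pair of sizes $(y,z)$ violates \eqref{betweeneq} is at most
$$\exp\big[n\log 2+z(1+\tfrac12\log\log n)-\tfrac{8\epsilon^3}{3}z\log n\big]\le\exp[-z]$$
for $n$ large, because $z\log n>n\log^{1/2}n\gg n$, so the $-\tfrac{8\epsilon^3}{3}z\log n$ term swamps both $n\log 2$ and $z(1+\tfrac12\log\log n)$. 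Summing over the at most $n^{2}$ choices of $(y,z)$ leaves total failure probability $\le n^{2}\exp[-n/\log^{1/2}n]=o(1)$.

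I do not expect a genuine obstacle: the content is the accounting above, and the point is \emph{why} it goes through — the per–pair deviation probability decays like $\exp(-\Theta_\epsilon(z\log n))$, while the number of relevant pairs is only $\exp(O(n)+O(z\log\log n))=\exp(o(z\log n))$, precisely because $z\ge n/\log^{1/2}n$ forces $z\log n$ to dominate $n$. Under a weaker hypothesis such as $z\gg n/\log n$ this slack would vanish and one would need $\epsilon$ bounded away from $0$ together with a careful tracking of constants; the stated lower bound on $|Z|$ is exactly what keeps the estimate routine.
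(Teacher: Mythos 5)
Your proof is correct and follows essentially the same route as the paper: a Chernoff bound per pair, followed by a union bound over all pairs, with the hypothesis $|Z|>n/\log^{1/2}n$ (together with $|Y|>\epsilon n$ and $p\ge 8\log n/n$) ensuring that the deviation exponent $\Theta_\epsilon(z\log n)$ dominates the entropy term $O(n)+O(z\log\log n)$. The only cosmetic difference is that you bound $\binom{n}{y}$ by $2^n$ rather than by $\exp[y\log(en/y)]$ as the paper does, which changes nothing substantive.
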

There is nothing special about the value $\frac{n}{\log^{1/2} n}$; it is chosen to ensure that $|Z|=\omega(1/p)$ and $yp \gg \log(en/z)$. Additionally, the value $\frac{n}{\log^{1/2}n}$ will be a convenient cut-off later, so it is unnecessary to prove the lemma in more generality. 

\begin{proof}
We may of course assume $\epsilon$ is fairly small, and observe that for any given $Y,Z \subseteq V(G)$ disjoint with sizes $y$ and $z$, Theorem \ref{cher} gives
$$\pr(\nabla(Y,Z)\neq (1\pm \epsilon)yzp)\le \exp\left[-\frac{\epsilon^2}{3}yzp\right].$$
So, the probability that there are disjoint sets of sizes $y$ and $z$ violating (\ref{betweeneq}) is no more than
\begin{align*}
{n \choose y}{n \choose z}\exp\left[-\frac{\epsilon^2}{3}yzp\right] &\le\exp\left[y\log(en/y)+z\log(en/z)-\frac{\epsilon^2}{3}yzp\right]\\
&\le \exp\left[-\epsilon^2 y\log^{1/2} n\right]
\end{align*}
Summing over the appropriate values of $y$ and $z$ we have
\begin{equation*}
    \sum_y\sum_z\exp\left[-\epsilon^2 y\log^{1/2} n\right]=o(1).
\end{equation*}

\end{proof}

\begin{prop} \label{prop4}
W.h.p.\ for all fixed $a,b,c$ with 
\begin{itemize}
    \item $c-1\le b<a$,
    \item $c+b+a=n$,
    \item and $a>\frac{33n}{50}$
\end{itemize}
if $V(G)= A \cup B\cup C$ is a partition such that $|A|=a$, $|B|=b$, and $|C|=c$ then
$$|\nabla(A,B)| \ge .1 abp.$$
Additionally, w.h.p.\ if $b,c\ge \frac{n}{\log^{1/2} n}$ then
$$|\nabla(B,C)|\le 3bcp.$$
\end{prop}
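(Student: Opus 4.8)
The plan is to prove the two statements separately, treating the (harder) lower bound first.

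For $|\nabla(A,B)|\ge .1abp$, I would first condition on a few events that each hold w.h.p.\ and then argue deterministically. The events are: the minimum-degree bound $\delta(G)\ge 0.4np$, obtained by applying the lower-tail estimate of Theorem~\ref{cher} to $d(x)\sim B(n-1,p)$ and union-bounding over the $n$ vertices (one must use the $\varphi$-form here, since the cruder $\exp[-\lambda^2/(2\mu)]$ bound is not quite strong enough when $p$ is close to $\tfrac{8\log n}{n}$); and the conclusions of Propositions~\ref{int}, \ref{bigint} and~\ref{between}, the last taken with the fixed value $\epsilon=\tfrac12$. A finite intersection of w.h.p.\ events is w.h.p., so it suffices to show that, whenever these deterministic consequences hold, every partition $V(G)=A\cup B\cup C$ with sizes obeying the three bulleted conditions satisfies $|\nabla(A,B)|\ge.1abp$. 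Fix such a partition; we may assume $b\ge 1$, as otherwise $abp=0$, and note this forces $a<n$.

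The argument then splits on the size of $B$. If $b>\tfrac{n}{\log^{1/2}n}$, Proposition~\ref{between} applies with $Y=A$ (legitimate since $a>\tfrac{33n}{50}>\tfrac n2$) and $Z=B$, giving $|\nabla(A,B)|\ge\tfrac12 abp>.1abp$. If $b\le\tfrac{n}{\log^{1/2}n}$, set $W=B\cup C$ and $w=|W|=b+c$, so that $w\le 2b+1\le 3b$ and $w\le\tfrac{2n}{\log^{1/2}n}+1$, and use the identity
\[
|\nabla(A,B)|=\sum_{x\in B}d(x)-2|E(B)|-|\nabla(B,C)|\ \ge\ \sum_{x\in B}d(x)-2|E(W)|,
\]
valid because $2|E(B)|+|\nabla(B,C)|\le 2\big(|E(B)|+|E(C)|+|\nabla(B,C)|\big)=2|E(W)|$. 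Here $\sum_{x\in B}d(x)\ge 0.4\,bnp$ by the minimum-degree bound, and we bound $|E(W)|$ by whichever of Propositions~\ref{bigint} and~\ref{int} applies to a set of size $w$: if $w\le\tfrac{\log n}{150p}$ then $2|E(W)|\le\tfrac{2w\log n}{3}\le 2b\log n\le\tfrac{bnp}{4}$ (using $\log n\le np/8$), while if $w>\tfrac{\log n}{150p}$ then $2|E(W)|<300w^2p\le\tfrac{900\,wnp}{\log^{1/2}n}\le\tfrac{2700\,bnp}{\log^{1/2}n}<0.1\,bnp$ once $n$ is large. Either way $|\nabla(A,B)|\ge 0.4\,bnp-0.25\,bnp=0.15\,bnp>.1abp$. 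I expect this last case to need the most care: the cut-off $\tfrac{n}{\log^{1/2}n}$ is precisely what forces $w=O(n/\log^{1/2}n)$, hence $w^2p=o(wnp)$, so that the crude Proposition~\ref{int} bound on $|E(W)|$ is negligible against the degree sum; one must also check that the minimum-degree constant, the constant $300$ from Proposition~\ref{int}, and the target $.1$ fit together, since the margins (especially in the first subcase) are slim.

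For the ``additionally'' statement I would argue directly. For disjoint $B,C$ the count $|\nabla(B,C)|$ has the binomial distribution $B(bc,p)$, so Theorem~\ref{ld} with $K=3$ gives $\pr(|\nabla(B,C)|>3bcp)<\exp[-3bcp\log(3/e)]<\exp[-0.29\,bcp]$. When $b,c\ge\tfrac{n}{\log^{1/2}n}$ we have $bcp\ge\tfrac{n^2}{\log n}\cdot\tfrac{8\log n}{n}=8n$, so this probability is at most $\exp[-2.3n]$. A union bound over the at most $3^n$ ordered pairs of disjoint vertex sets (put each vertex in $B$, in $C$, or in neither) then bounds the total failure probability by $3^n\exp[-2.3n]=\exp[-(2.3-\log 3)n]=o(1)$. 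This part uses neither the hypotheses relating $a,b,c$ nor any of the earlier propositions.
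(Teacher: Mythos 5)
Your proof is correct, but it takes a genuinely different route from the paper's for both halves, so a comparison is in order.

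For the lower bound $|\nabla(A,B)|\ge .1abp$, the paper splits at $b\approx\log^{1/2}n$: for $b<\log^{1/2}n$ it uses the same degree-sum identity you do (with the trivial bound $2|E(B)|+|\nabla(B,C)|\le 2\log n$ for such tiny $B\cup C$), and for $b\ge\log^{1/2}n$ it runs a fresh Chernoff-plus-union-bound calculation over choices of $B$ and $C$. You instead split at $b\approx n/\log^{1/2}n$, handle the large-$b$ range by directly invoking Proposition~\ref{between}, and extend the degree-sum argument to cover the whole range $b\le n/\log^{1/2}n$ by bounding $|E(B\cup C)|$ via Propositions~\ref{bigint} and~\ref{int}. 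This is a clean reuse of existing machinery and eliminates a new probabilistic computation; the paper's route instead keeps the two pieces of the case split cleanly ``probabilistic'' vs.\ ``deterministic.'' Both work, and the constants check out in your version. For the upper bound $|\nabla(B,C)|\le 3bcp$, the paper uses Chernoff plus a $\binom{n}{b}\binom{n}{c}$ union bound summed over $b,c$; you use Theorem~\ref{ld} with $K=3$ and the much cruder $3^n$ union bound. That cruder count succeeds only because $bcp\ge 8n$ gives an exponentially small failure probability, which you correctly observe; it is also slightly more general than needed, since it drops the $c\le b+1$ restriction that the paper uses (harmlessly).

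One small inaccuracy: you claim the crude $\exp[-\lambda^2/(2\mu)]$ lower-tail bound is ``not quite strong enough'' for the minimum-degree estimate at $p$ near $\frac{8\log n}{n}$. It actually is: with threshold $0.4np$ one has $\lambda\approx 0.6\mu$, so the crude bound gives $\exp[-0.18np]\le n^{-1.44+o(1)}$, and a union bound over $n$ vertices still yields $n^{-0.44+o(1)}=o(1)$. The $\varphi$-form is not required here (the paper in fact uses essentially the crude form to claim $\delta(G)\ge\frac{5np}{12}$). This is an over-caution rather than a gap, so it does not affect the validity of the argument.
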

\begin{proof}
First we note that the statement holds for all $B$ with, for example, $b<\log^{1/2}n$ because of a bound on the minimum degree. For example, given our bound on $p$, we know w.h.p.\ the minimum degree is, for example, at least $\frac{5np}{12}$. By Theorem \ref{cher} the probability that there is a vertex of smaller degree is at most
\begin{equation*}
    n\exp\left[-\frac{np}{6}\right]\le n^{-1/3}.
\end{equation*}
We know 
\begin{equation*}
|\nabla(A,B)| = \sum_{v\in B}d(v)-2|E(B)|-|\nabla(B,C)|\ge \frac{5|B|np}{12}-2\log n > .1|A||B|p.
\end{equation*}
For fixed $A,B$ such that $|A|=a>\frac{33n}{50}$ and $|B|=b\ge \log^{1/2}n$ Theorem \ref{cher} gives 
\begin{equation*}
    \Pr(|\nabla(A,B)|<.1abp)\le \exp\left[\frac{-.9^2abp}{2} \right].
\end{equation*}
Thus the probability that any such $A,B$ have $|\nabla(A,B)|<.1abp$ is at most
\begin{align*}
    {n \choose c}{n \choose b}\exp\left[-.4abp \right]&\le
    \exp[c\log n +b \log n-.4abp]\\
    &<\exp[(2b+1)\log n - 2.11b\log n]\\
    &< n^{-b/10}.
\end{align*}
Summing over $b>\log^{1/2}n$ and $c\le b+1$ we have
\begin{equation*}
    \sum_b \sum_{c}n^{-b/10}=o(1).
\end{equation*}

In the second case we know again by Theorem \ref{cher} that for fixed $B$ and $C$
\begin{equation*}
    \Pr(|\nabla(B,C)|> 3bcp)\le \exp\left[-1.2bcp \right].
\end{equation*}
Thus the probability that any such $B,C$ have $|\nabla(B,C)|>3bcp$ is at most
\begin{align*}
    {n \choose c}{n \choose b}\exp\left[-1.2bcp \right]&\le\exp\left[(2b+1)\log( e\log^{1/2}n)-9b \log^{1/2} n\right].
\end{align*}
Summing over $b,c \ge \frac{n}{log^{1/2}n}$ with $c\le b+1$ we have
\begin{equation*}
    \sum_b \sum_{c}\exp\left[(2b+1)\log( e\log^{1/2}n)-9b \log^{1/2} n\right]=o(1).
\end{equation*}
\end{proof}

\section{Proof of Theorem \ref{main}}\label{mainpf}
\begin{proof}[\unskip\nopunct]
In what follows the vertex set of each subgraph is $V$ (the vertex set of $G$). Thus we identify subgraphs of $G$ with their edge sets. We also abusively use simply ``component" for the vertex set of a component. We will show that w.h.p.\ for any subgraph of $G_{n,p}$ with matching number $k$ not of either form (\ref{form1}) or (\ref{form2}) we can construct a larger subgraph with the same matching number. To do this we rely on the Tutte-Berge formula (see e.g. \cite[Corollary 3.3.7]{W}):
\begin{thm}\label{TB}
For every graph $G =(V,E)$, $$|V|-2\nu(G)=\max_{S\subseteq V} o(G- S)-|S|,$$
where $o(G - S)$ is the number of odd components of $G- S$.
\end{thm}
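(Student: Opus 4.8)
The plan is to prove the two inequalities separately. Write $\mathrm{def}(G) := |V| - 2\nu(G)$ for the number of vertices left unsaturated by a maximum matching, and $d := \max_{S \subseteq V}(o(G-S) - |S|)$, so that the claim is $\mathrm{def}(G) = d$.

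\emph{The inequality $\mathrm{def}(G) \ge d$.} Fix $S \subseteq V$, a maximum matching $M$, and let $U$ be the set of $M$-unsaturated vertices, so $|U| = \mathrm{def}(G)$. Every edge of $G$ meeting a component $C$ of $G - S$ either lies inside $C$ or joins $C$ to $S$; in particular each $M$-edge meeting $C$ does one of these two things. Since an odd component has an odd number of vertices, $M$ cannot saturate all of $C$ using only edges inside $C$, so every odd component of $G - S$ contains a vertex that is either in $U$ or $M$-matched to $S$. At most $|S|$ odd components can have a vertex matched into $S$ (the matched vertices of $S$ being distinct), so at least $o(G-S) - |S|$ odd components each contain a vertex of $U$, these vertices lying in distinct components and hence distinct. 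Thus $|U| \ge o(G-S) - |S|$, and taking the maximum over $S$ gives $\mathrm{def}(G) \ge d$.

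\emph{The inequality $\mathrm{def}(G) \le d$.} Here I would reduce to Tutte's $1$-factor theorem — a graph $H$ has a perfect matching iff $o(H - T) \le |T|$ for all $T \subseteq V(H)$ — which I take as known. First, $o(G - S) \equiv |V| - |S| \pmod 2$, so $o(G-S) - |S| \equiv |V| \pmod 2$ for every $S$; hence $d \equiv |V| \pmod 2$, and $d \ge 0$ by taking $S = \emptyset$. Build $G^*$ from $G$ by adding $d$ new vertices and joining each new vertex to every other vertex of $G^*$. I claim $G^*$ satisfies Tutte's condition. Let $T \subseteq V(G^*)$. If $T$ omits some new vertex $v$, then $v$ is adjacent to all of $G^* - T$, so $G^* - T$ is connected and $o(G^* - T) \le 1 \le |T|$, the last step failing only if $T = \emptyset$ — but if $T = \emptyset$ then either $d \ge 1$ and $G^*$ is connected with $o(G^*) = 0$, or $d = 0$, $G^* = G$, and $o(G) \le 0$ is exactly the $S = \emptyset$ instance of $d = 0$. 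If instead $T$ contains all $d$ new vertices, write $T = S \cup \{\text{new vertices}\}$ with $S \subseteq V$; then $G^* - T = G - S$, so $o(G^* - T) = o(G - S) \le |S| + d = |T|$ by definition of $d$. Hence $G^*$ has a perfect matching $M^*$. Deleting from $M^*$ every edge incident to a new vertex leaves a matching of $G$ that misses at most $d$ vertices of $V$ (one for each new vertex matched into $V$), so $2\nu(G) \ge |V| - d$, i.e.\ $\mathrm{def}(G) \le d$. Combining the two inequalities yields $\mathrm{def}(G) = d$, which is the formula.

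\emph{Main obstacle.} Everything above is essentially bookkeeping; the one substantive ingredient is Tutte's $1$-factor theorem. A fully self-contained treatment could prove it by induction on $|V|$, or obtain both Tutte's theorem and the present formula from the Gallai–Edmonds decomposition, but since this paper invokes the formula only as a black box I would simply cite it, exactly as the statement does.
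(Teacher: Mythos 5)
Your proof is correct. Note, though, that the paper does not prove this statement at all: the Tutte--Berge formula is invoked as a black box with a citation to West, so there is no in-paper argument to compare yours against. What you wrote is the standard textbook derivation: the inequality $|V|-2\nu(G)\ge o(G-S)-|S|$ by counting, in a maximum matching, the vertices of odd components that are either unsaturated or matched into $S$; and the reverse inequality by adding $d$ universal vertices (using the parity fact $d\equiv |V|\pmod 2$ so that the augmented graph has even order), verifying Tutte's condition for the augmented graph --- the case $T\supseteq\{\text{new vertices}\}$ being exactly the definition of $d$ --- and deleting the at most $d$ matching edges meeting the new vertices. All steps check out; the only external ingredient is Tutte's $1$-factor theorem, which you correctly flag and which is a legitimate thing to cite, especially since the paper itself cites the stronger statement outright. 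One tiny presentational point: when you handle $T=\emptyset$ with $d\ge 1$, it is worth saying explicitly that $o(G^*)=0$ because $G^*$ is connected \emph{and} $|V|+d$ is even by the parity observation; you established that fact but did not invoke it at that spot.
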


Suppose $H$ is a largest subgraph of $G$ with matching number $k$. By Theorem \ref{TB} we may assume that there is a partition 
\begin{align}
V= S \cup A_1 \cup \cdots \cup A_d \label{config}
\end{align} with each $|A_i|$ odd and $d-|S|=n-2k$, such that $H$ consists of those edges of $G$ that are either incident to $S$ or contained in some $A_i$. For suppose $K \subseteq G$ has $\nu(K)=k$ and $S \subseteq V$ satisfies $n-2k=o(K - S)-|S|$ as in Theorem~\ref{TB}, with $B_1, \ldots B_d$ and $C_1, \ldots, C_l$ the odd and even components of $K - S$, respectively. Then letting $A_1 = B_1 \cup \bigcup_{i=1}^l C_i$ and $A_i=B_i$ for $i \ge 2$ we find that $H$ as above contains $K$ and still satisfies $\nu(H)=k$. We must only check that if $H$ is of either desired form then w.h.p.\ $|H|>|K|$. (If $H$ is not of the desired form then the trivial $|H|\ge |K|$ is enough, as we will show that for every subgraph with a partition as in (\ref{config}) not of the two desired forms we can w.h.p.\ construct a larger subgraph without increasing the matching number.)
First note that $H$ cannot be of form (\ref{form2}) since $|A_1|\ge 3$. 
If $H$ is of form (\ref{form1}) then
\begin{equation*}
    \left(\bigcup_{i=1}^d B_i \right)\cup \left(\bigcup_{i=1}^l C_i\right)=V.
\end{equation*}
Furthermore, we know that w.h.p.\ any partition of $G$ into two non-empty sets $B$ and $C$ has $|\nabla(B,C)|>0$. To see this fix such a partition. We may assume $|B|\ge n/2$, and we have
\begin{equation*}
    \Pr(|\nabla(B,C)|=0)=(1-p)^{|B||C|}\le \exp[-|B||C|p]\le \exp[-4|C|\log n],
\end{equation*}
where the last inequality uses $p\ge\frac{8\log n}{n}$ and $|B|\ge n/2$.
Taking the union bound over all choices for $C$ we get that the probability there is a partition with no crossing edges is at most
\begin{align*}
    \sum_{|C|\le n/2}{n \choose |C|}\exp[-4|C|\log n]&\le \sum_{|C|\le n/2}\exp[|C|(\log(en/|C|)-4\log n)]\\
    &\le \sum_{|C|\le n/2}n^{-2|C|}=o(1).
\end{align*}
Thus (with some reordering of the odd components) we may assume that $$\left|\nabla\left(B_1, \bigcup_{i=1}^l C_i\right)\right|>0$$ as desired. 

\medskip

For convenience we will always assume $|A_i| \ge |A_j|$ for all $i < j$. Now forms (\ref{form1}) and (\ref{form2}) correspond to configurations where:
\begin{enumerate}[label=(\alph*)]
\item $S= \emptyset$, $|A_1|= 2k+1$, and $A_2, \ldots A_d$ are single vertices; \label{config1}
\item $|S|=k$ and $A_1,\ldots A_d$ are single vertices. \label{config2}
\end{enumerate}
Since every subgraph we consider is specified by a vertex decomposition as in (\ref{config}), Theorem \ref{main} may be rewritten as a statement about such decompositions. The following notation will be helpful. We use $\Pi$ for decompositions as in (\ref{config}) (formally $\Pi$ is the partition of $V$ with blocks $S, A_1, \ldots, A_d$). We let $d(\Pi)$ to be the number of $A_i's$, $s(\Pi) = |S|$, and $r(\Pi)= d(\Pi)-s(\Pi)$. We say that the \emph{size} of $\Pi$, denoted $|\Pi|$, is the number of edges in the corresponding $H$. The EG Property for $G$ then becomes: 
\begin{center}
for each $k \le \nu(G)$ every largest $\Pi$ with $r(\Pi)= n-2k$ is of one of the forms \ref{config1}, \ref{config2}.
\end{center}
To prove the theorem in this form we show that for any given $k \le \nu(G)$ and decomposition $\Pi$ with $r(\Pi)= n-2k$ not of form \ref{config1} or \ref{config2} there is a larger $\Pi'$ with $r(\Pi)= r(\Pi')$. Since we will be comparing the sizes of two decompositions it is convenient to use the notation $S$ and $A_i$ for the blocks $\Pi$ while $S'$ and $A_i'$ are the blocks of $\Pi'$. Similarly we use $H$ for the edge set of $\Pi$, and $H'$ for the edge set of $\Pi'$. Additionally, it will be helpful to set 

$$B= \bigcup_{i=2}^d A_i\hspace{.5 cm}; \hspace{.5 cm}y(\Pi) = |B|-(d(\Pi)-1).$$

Note that $y(\Pi)$ is the number of ``excess'' vertices in $B$, and that in both desired forms $|B|=d-1$ (and thus $y(\Pi)=0$).
Since $d(\Pi)-s(\Pi)\ge 0$ we have for all decompositions $\Pi$ 
\begin{align}
\left|\bigcup_{i=1}^d A_i\right| \ge n/2. \label{Asize}
\end{align}

How we proceed will depend on the particulars of the given $\Pi$, which we divide into seven primary cases. For all but the last of these the argument is deterministic in the sense that we show the existence of the desired $\Pi'$ provided $G$ satisfies the conclusions of Propositions \ref{int}-\ref{prop4}; recall these were: \\ 
\underline{\ref{int}:} For any $\epsilon>0$ for all $X \subseteq V(G)$ with $|X|>\epsilon n$ w.h.p.\
\begin{equation*}
|E(X)| = (1 \pm \epsilon) {|X| \choose 2}p.
\end{equation*}
Additionally, w.h.p.\ for all $X \subseteq V(G)$ with $|X| > \frac{\log n}{150 p}$ 
\begin{equation*}
    |E(X)| \le 300 {|X| \choose 2}p.
\end{equation*}
\underline{\ref{bigint}:} W.h.p.\ for all $X\subseteq V(G)$ with $ |X| \le \frac{\log n}{150p}$ 
\begin{equation*}
    |E(X)| \le \frac{|X|\log n}{3}.
\end{equation*}
\underline{\ref{between}:} For any fixed $\epsilon>0$ w.h.p.\
\begin{equation*}
    |\nabla(Y,Z)| = (1\pm\epsilon) |Y||Z|p
\end{equation*}
whenever $Y, Z \subseteq V(G)$ are disjoint and satisfy $|Y|>\epsilon n$ and 
and $|Z|>\frac{n}{\log^{1/2} n}$.\\
\underline{\ref{prop4}}: W.h.p.\ for all fixed $a,b,c$ with 
\begin{itemize}
    \item $c-1\le b<a$,
    \item $c+b+a=n$,
    \item and $a>\frac{33n}{50}$
\end{itemize}
if $V(G)= A \cup B\cup C$ is a partition such that $|A|=a$, $|B|=b$, and $|C|=c$ then
$$|\nabla(A,B)| \ge .1 abp.$$
Additionally, w.h.p.\ if $b,c\ge \frac{n}{\log^{1/2} n}$ then
$$|\nabla(B,C)|\le 3bcp.$$
The cases are divided as follows:
\begin{enumerate}
    \item $|A_1|<n/2000$ and $y(\Pi)\ge n/2000$.
    \item $|A_1|,y(\Pi) <n/2000$.
    \item $|A_1|\ge n/2000$ and $|A_1|\le 3.99|B|$.
    \item $|A_1|>3.99 |B|$ and $y(\Pi)\ge 10^{-4}n$.
    \item $|A_1|>3.99 |B|$ and $0<y(\Pi)<10^{-4}n$.
    \item $|A_1|>3.99 |B|$, $0=y(\Pi)$, and $s(\Pi)>n/\log^{1/2} n$ or $|B|<\log^{1/2} n$.
    \item $|A_1|>3.99 |B|$, $0=y(\Pi)$, $|B|\ge \log^{1/2} n$, and $s(\Pi)<n/\log^{1/2} n$.
\end{enumerate}
These clearly cover all possible $\Pi$. (None of the inequalities are tight for their given argument, but are merely convenient cut-offs.) Again, in each of cases 1-6 we show Propositions \ref{int}-\ref{prop4} imply the existence of a $\Pi'$ that is larger than $\Pi$. We will say more about the final case when we come to it. \\

\subsection{Case 1: $|A_1|<n/2000$ and $y(\Pi)\ge n/2000$}  

Arbitrarily select $x_i \in A_i$ for $i \ge 2$, and let $M_i = A_i \setminus \{x_i\}$. To form $\Pi'$ let 
\begin{align}
A'_1&= A_1 \cup \bigcup_{i=2}^d M_i,\\
A'_i&= \{x_i\} \text{ for } i \ge 2, \text{ and }\\
S'&=S.
\end{align}
Note that since $d(\Pi)= d(\Pi')$ and $s(\Pi)=s(\Pi')$ we have $r(\Pi)=r(\Pi')$, as desired. 
We now check that $\Pi'$ is, in fact, larger than $\Pi$. First note that 
\begin{align}|H \setminus H'| = \sum_{i=2}^d d_{A_i}(x_i).\end{align}
Clearly $d_{A_i}(x_i) \le |A_i|-1$, so $|H \setminus H'| \le y <n$.
Furthermore, let $M_1 = A_1$. Then $H' \setminus H$ contains all edges joining distinct $M_i$'s, so for any $j$,
\begin{equation}\label{eq:1}
|H' \setminus H| \ge \left|\nabla \left(\bigcup_{i=1}^j M_i,\bigcup_{i=j+1}^n M_i\right)\right|.
\end{equation}

On the other hand if we take $j$ minimum with $|\bigcup_{i=1}^j M_i| =\epsilon n$ (for some $\epsilon>0$) then $y\ge n/2000$ implies $\bigcup_{i=j+1}^n M_i$ has size $\epsilon_1 n$. Thus by Proposition \ref{between} w.h.p.\ the cardinality in (\ref{eq:1}) is $\Omega(n\log n)$. Hence w.h.p.\ $|H \setminus H'| < |H' \setminus H|$ as desired.

\subsection{Case 2: $|A_1|,y(\Pi) <n/2000$} 

In this case, we select a particular $x$ such that $\{x\}= A_k$ for some $k$ and let $S'= S \cup \{x\}$. This clearly increases $s$ and decreases $d$, so we then select two vertices, $v$ and $z$, in some $A_j$ and let $\{v\}$ and $\{z\}$ be two new singleton $A'_i$'s, which maintains $r(\Pi)= r(\Pi')$. ($A_i = A'_i$ for all other $A_i$'s.) In order to ensure that $\Pi'$ is larger than $\Pi$ we must carefully select $x, v, $ and $z$ as follows.

Let $L$ be the set of singleton $A_i$'s and $d_1$ be the number of non-singleton $A_i$'s (where $i\ge 2$). Note that the number of vertices in non-singleton $A_i$'s is $y+d_1$. However, since each non-singleton component must contain at least $3$ vertices $d_1\le y/2$. Using this and (\ref{Asize}) we have $|L| \ge n/2-3y/2-|A_1|$. Since $|A_1|$ and $y$ are both at most $n/2000$ we (easily) have $|L| \ge n/3$. Hence, by Proposition \ref{int} we may assume that for any fixed $\epsilon_1>0$ $$|E(L)| \ge (1- \epsilon_1) {|L| \choose 2}p.$$ Thus there is a singleton $x$ such that $$\d_{\bigcup A_i}(x) \ge \d_L(x) \ge (1-\epsilon_1)(|L|-1)p.$$ Therefore, by letting $x$ be the singleton moved into $S'$ we have $$|H' \setminus H| = d_{\bigcup A_i}(x) \ge (1-\epsilon_1)(|L|-1)p.$$

To guarantee $\Pi'$ is larger than $\Pi$ we must ensure that w.h.p.\ we can select some $A_j$ and $v,z \in A_j$ such that $$\d_{A_j}(v)+\d_{A_j}(z) \le (1-\epsilon)(|L|-1)p,$$ since $|H \setminus H'|\le d_{A_j}(v)+d_{A_j}(z)$. 
If there is some $A_j$ with $|A_j|<\frac{\log n}{150p}$ then Proposition \ref{bigint} gives w.h.p.\ $$|E(A_j)|<\frac{|A_j|\log n}{3}.$$ Thus there are $v,z \in A_j$ such that $d_{A_j}(v)+d(A_j)(z)<2\log n$. Since 
\begin{equation*}
(1-\epsilon_1)(|L|-1)p>2\log n
\end{equation*}(for an appropriate choice of $\epsilon_1$) such $v,z$ suffice.
Finally if all $A_j$ have size at least $\frac{\log n}{150p}$ Proposition \ref{int} gives w.h.p.\
\begin{equation*}
    |E(A_j)|\le300{|A_j| \choose 2}p.
\end{equation*}
Thus we may assume there are $v,z\in A_j$ such that $d_{A_j}(v)+d_{A_j}(z)\le 600 |A_j|p$. However, recall for all $j$ we have $|A_j|<n/2000$, giving $$600|A_j|p<\frac{3np}{10}<(1-\epsilon_1)(|L|-1)p$$
(again for appropriate choice of $\epsilon_1$).

\subsection{Case 3: $|A_1|\ge n/2000$ and $|A_1|\le 3.99|B|$} 
Here we arbitrarily split $A_1$ into $A_1^1$ and $A_1^2$ where $|A_1^1|= \lfloor |A_1|/2 \rfloor$ and $|A_1^2|= \lceil |A_1|/2 \rceil$. We let $S'= S\cup A_1^1$, and let every vertex in $A_1^2$ become its own singleton $A'_i$. (All other $A_i$'s remain unchanged in $\Pi'$.) Note that $s(\Pi')= s(\Pi)+ \lfloor |A_1|/2 \rfloor$ and $d(\Pi')= d(\Pi)-1+ \lceil |A_1|/2 \rceil$. Since $|A_1|$ is odd $r(\Pi')= r(\Pi)$.

Again we must ensure that $\Pi'$ is larger than $\Pi$. First note that, $H \setminus H' = E(A_1^2)$. Since $|A_1^2|>n/4000$, for any fixed $\epsilon_3>0$, Proposition \ref{int} gives w.h.p.\ $$|E(A_1^2)| \le (1+ \epsilon_3){|A_1^2| \choose 2}p.$$ Additionally, $H' \setminus H = \nabla(A_1^1,B)$. Since we easily have $|A_1^1|,|B|\ge n/8000$, for any fixed $\epsilon_4>0$, Proposition \ref{between} gives w.h.p.\ $|\nabla(A_1^1,B)| \ge (1-\epsilon_4)|A_1^1||B|p$. Given our assumption that $|A_1| \le 3.99|B|$ it is simple to check that 
\begin{equation*}
\frac{(1+\epsilon_3) |A^2_1|^2 p}{2}<(1-\epsilon_4)|A_1^1||B|p
\end{equation*} for appropriate choices of $\epsilon_3$ and $\epsilon_4$. 

\subsection{Case 4: $|A_1|>3.99 |B|$ and $y(\Pi)\ge 10^{-4}n$} 
Here we create $\Pi'$ in the same manner as in Case 1 (moving all but one vertex of each $A_i$ for $i \ge 2$ to $A_1$). Again since $s(\Pi)= s(\Pi')$ and $d(\Pi)= d(\Pi')$ we still have $r(\Pi)=r(\Pi')$. As before, $|H \setminus H'| \le y$. Let $M$ be the set of vertices moved from $B$; then, $H' \setminus H \supseteq \nabla(A_1,M)$. Since $y\ge 10^{-4}n$ (and thus also, say, $|A_1|>10^{-5}n$) for any fixed $\epsilon_5>0$ Proposition \ref{between} gives w.h.p.\
\begin{equation*}
|\nabla(A_1,M)| \ge (1-\epsilon_5)|A_1|yp,
\end{equation*} which is larger than $y$ as desired.

\subsection{Case 5: $|A_1|>3.99 |B|$ and $0<y(\Pi)<10^{-4}n$} 

In this case to create $\Pi'$ we first select a particular $M \subseteq B$ with $|M|=y$ and let $A'_1 = A \cup M$. Then to keep $d(\Pi')= d(\Pi)$ we let all the vertices in $B \setminus M$ form their own singleton $A'_i$'s. Thus we maintain that 
\begin{equation*}
d(\Pi')= |B|- y +1 = d(\Pi).
\end{equation*}
Since we let $S'= S$ we have $r(\Pi')= r(\Pi)$. Again note $H' \setminus H \supseteq \nabla(A_1,M)$. 
To choose an appropriate $M$ first note that since $|A_1| > 3.99 |B|$ and $|B|\ge d+1> |S|$ we have 
$$n= |A_1|+|B|+|S| < \frac{50 |A_1|}{33}.$$
So, by Proposition \ref{prop4} we may assume $$|\nabla(A_1,B)| \ge .1 |A_1||B|p.$$ Thus there is some $M \subseteq B$ with $|M|=y$ such that 
\begin{equation}\label{case5Mbound}
|\nabla(A_1,M)| \ge .1|A_1|yp.
\end{equation}
Furthermore, $H \setminus H' \subseteq \bigcup_{i=2}^d E(A_i) \subseteq E(B_1)$, where $B_1$ is the set of all the vertices in $B$ not in a singleton $A_i$. 
By Proposition \ref{bigint} if $|B_1|\le \frac{\log n}{150p}$ we have w.h.p.\
\begin{equation}\label{b1int}
    |E(B_1)| <  \frac{|B_1| \log n}{3} .
\end{equation}
Since $|B_1|\le 3y/2$ we know (\ref{b1int}) is at most
$$ \frac{y \log n}{2}.$$
Combining this with (\ref{case5Mbound}) we have
\begin{equation*}
    |H' \setminus H|>.1|A_1|yp> \frac{y \log n}{2} >|H\setminus H'|,
\end{equation*}
where the second inequality follows easily from $|A_1|>\frac{33n}{50}$ and $p\ge \frac{8\log n}{n}$.
On the other hand if $|B_1|> \frac{\log n}{150p}$ then Proposition \ref{int} gives 
\begin{equation}\label{b1int2}
    |E(B_1)|<300{|B_1| \choose 2}p <350y^2p.
\end{equation}
Since $y<10^{-4}n$ and $|A_1| > \frac{33n}{50}$ we easily have (\ref{b1int2}) is less than $.1|A_1|yp$.

\subsection{Case 6: $|A_1|>3.99 |B|$, $0=y(\Pi)$, and $s(\Pi)>n/\log^{1/2} n$ or $|B|<\log^{1/2} n$} 
Since $y(\Pi)=0$ we know every $A_i$ for $i\ge 2$ is a singleton. 
To form $\Pi'$ we select $M \subseteq B$ of size $s(\Pi)$ and let $A'_1= A_1 \cup S \cup M$. Thus $S'= \emptyset$, and the $A'_i$ for $i \ge 2$ are simply those in $B \setminus M$.
Note that since both $s(\Pi')= 0$ and $d(\Pi')= d(\Pi)-s(\Pi)$ we have $r(\Pi')= d(\Pi)-s(\Pi)= r(\Pi)$. Here $H'\setminus H \supseteq \nabla(A_1, M)$. 

First consider when $ |S|>n\log^{-1/2}n$ (which also implies $|B|\ge n\log^{-1/2}n$).
For any fixed $\epsilon_6>0$ Proposition \ref{between} gives w.h.p.\
\begin{equation*}
|\nabla(A_1,M)|\ge (1-\epsilon_6)|A_1||M|p.
\end{equation*}
Similarly, $H \setminus H' = \nabla(S,B \setminus M)$, and by Proposition \ref{prop4} we may assume
\begin{equation*}
|\nabla(S,B \setminus M)| \le |\nabla(S,B)| < 3|S||B|p.
\end{equation*}
Since $|M|= |S|$ and $|A_1| \ge 3.99|B|$ we easily have $$(1-\epsilon_6)|A_1||M|p\ge 3|S||B|p$$ (for an appropriate choice of $\epsilon_6$).

If $|B|<\log^{1/2} n$ Proposition \ref{prop4} allows us to assume $$|\nabla(A_1,B)| \ge .1 |A_1||B|p.$$ Given this we can select $M$ with $$|\nabla(A_1,M)|> .1 |A_1||M|p =.1|A_1||S|p.$$ However, trivially, $$|\nabla(S,B \setminus M)|<\log^{1/2} n |S|$$ Thus we have \begin{equation*}
    |H' \setminus H|> .1|A_1||S|p > \log^{1/2} n |S| >|H \setminus H'| 
\end{equation*}as desired. 

\subsection{Case 7: $|A_1|>3.99 |B|$, $0=y(\Pi)$, $|B|\ge \log^{1/2} n$, and $s(\Pi)<n/\log^{1/2} n$}
In this case we will show that there is a partition $\Pi'$ larger than $\Pi$ that is of form \ref{config1} ($S'= \emptyset$, $|A'_1|=2k+1$, and $A'_2, \ldots, A'_d$ are all single vertices). For reference we restate that here we assume $\Pi$ has the following form:
\begin{enumerate}[label=(\alph*),start=3]
\item $0<s(\Pi)<n\log^{-1/2}n$, $A_1$ is the only non-singleton component, and $|A_1| > 3.99|B|$. \label{config3}
\end{enumerate}

\mn

In what follows, thinking of $\Pi$ as in (c), we will use $a$ for the size of $A_1$
and $b$ for $d(\Pi)-1$.  Let us note to begin that, since
\[
a = n-s-b = 2k-2s+1,
\]
any two of $k,s,a,$ and $b$ determine the others.
We assume throughout that whichever parameters we specify determine an $s$ and $a$ as in (c).

\mn

The present case differs from those above in that we need to be more conservative without use of the union bound. We can no longer afford to sum over possibilities for $B$. To avoid this we largely ignore the initial $\Pi$ and focus on $s=s(\Pi)$.

\mn
Precisely, we show that w.h.p.\ for every $k$, $s$, and $S$ of size $s$, the largest
$\Pi$ as in (c) with this $S$ and $r(\Pi)=n-2k$ is smaller than some $\Pi'$ as
in (a)
with
\begin{equation}\label{rpi}
r(\Pi')=r(\Pi).
\end{equation}

In analyzing what happens here we will use direct applications of Theorem \ref{cher} and Theorem \ref{ld}
(so in this case the argument is not ``purely deterministic").
Note that here, unlike in our earlier cases, simply assuming the ``w.h.p." statements of Section \ref{prelim}
causes trouble since further analysis then involves \emph{conditioning} on these properties,
and the resulting probability distribution is not one we are likely to understand.

Instead we identify, for each $k$ and $S$, a set of ``bad" events, say $E_k(S)$,
for which we can show, first, that
\begin{equation}\label{sumdS}
\sum_k\sum_S\pr(E_k(S)) =o(1)
\end{equation}
and, second, that if $E_k(S)$ does not occur, then there does exist some $\Pi'$ as above.
(Thus our union bound sums over choices of $k$ and $S$, but not $B$.)

\mn

To specify $\Pi$ for given $k,S$, we think of choosing the edges of $G-S$ and then those meeting $S$.  Given the first choice we may choose $A_1$ to be some $a$-subset of $V\sm S$ maximizing $|G[A_1]|$.  
(In case of ties we may, for example, assume some fixed ordering of the $a$-subsets of $V$ and take $A_1$ to be the first such maximizer in this ordering.)
Notice that, since the contribution to $|\Pi|$ of edges meeting $S$ doesn't depend on $A_1$, the $\Pi$ determined by this choice of $A_1$ is optimal for the given $k$ and $S$.

Given $A_1$ (equivalently, $\Pi$), we set $B= V\sm (S\cup A_1)$ (so $b=|B|$). To form $\Pi'$ from $\Pi$ we select $M \subseteq B$ with $|M|=s$, and let $A_1' = A_1 \cup M \cup S$.
Loosely, our bad events are:
 \begin{enumerate}
     \item $|\nabla(A_1,B)|$ is too small;
     \item $|\nabla(S,B)|$ is too large. 
 \end{enumerate}
Note that clearly for the first bad event we will need to take a union bound over all choices for $B$, but it is in the second bad event where we are able to avoid this.

The precise quantification will depend on $b$ (specifically, on whether it is $\Omega(n)$ or smaller). 

First assume $b>10^{-3}n$. Here (with .9 chosen for convenience) our bad events are:
 \begin{enumerate}
     \item $|\nabla(A_1,B)| < .9abp$;
     \item $|\nabla(S,B)| >.9asp$.
 \end{enumerate}

By Theorem \ref{cher} 
\begin{align*}
\pr(|\nabla(A_1,B)| \le .9abp) &\le \exp\left[- \frac{.1^2 abp}{2}\right].
\end{align*}
Therefore, 
\begin{align}
    \sum_k \sum_s \sum_{S: |S|=s} \sum_{B: |B|=b} \pr(|\nabla(A_1,B)| < .9abp) \le \nonumber\\
    \sum_k \sum_s \exp\left[s \log (en/s)+ b\log(en/b) - \frac{.1^2 abp}{2}\right]\le \nonumber \\
    \sum_k\sum_s \exp[-10^{-3}abp]. \label{e1}
\end{align}
Since $b>10^{-3}n$, $a>\frac{33n}{50}$, and $p\ge\frac{8 \log n}{n}$ we know (\ref{e1}) is, for example, at most
\begin{equation*}
    \sum_k\sum_s n^{-10^{-6}n} =o(1).
\end{equation*}

Additionally, for a fixed $X$ with $|X|=s$ and $Y$ with $|Y|=b$ Theorem \ref{cher} gives
\begin{align*}
  \pr(|\nabla(X, Y) |> .9asp) &\le \exp\left[-\frac{sp(.9a-b)^2}{2(b+(.9a-b)/3)}\right]. 
\end{align*}
since $a>3.99b$ one can check that $\frac{(.9a-b)^2}{2(b+(.9a-b)/3)}>.4a$.
Using this, $a>\frac{33n}{50}$, and $p\ge\frac{8\log n}{n}$ we have
\begin{align*}
\sum_k \sum_s \sum_{S:|S|=s} \pr(|\nabla(X, Y)| > .9asp) &\le \sum_k \sum_s \exp\left [s \log(e n/s)- .4asp \right] \\&\le \sum_k \sum_s n^{-1.1s}=o(1).
\end{align*}

Thus we know w.h.p.\ 
we can find some $M \subseteq B$ with $|M|=s$ such that $|\nabla(A_1, M)|>|\nabla(S,B)|$, ensuring that $\Pi'$ is larger than $\Pi$.

Now we assume $b<10^{-3}n$. Here (again with .1 chosen for convenience) our bad events are: 
 \begin{enumerate}
     \item $|\nabla(A_1,B)| < .1abp$;
     \item $|\nabla(S,B)| >.1asp$.
 \end{enumerate}
By Theorem \ref{cher} we have 
\begin{equation*}
    \pr(\nabla(A_1, B)<.1abp)< \exp\left[\frac{-.9^2abp}{2}\right].
\end{equation*}
Thus $p\ge \frac{8 \log n}{n}$ and $a\ge(1-2\cdot 10^{-3})n$ gives:
\begin{align*}
\sum_k\sum_s \sum_{S: |S|=s} \sum_{|B|=b} \pr(|\nabla(A_1,B)| < .1 abp) & \le \exp\left[(s+b)\log(en) - \frac{.9^2abp}{2}\right]\\
&\le \exp\left[(2b+1)\log(e n) - \frac{.9^2 abp}{2} \right]\\
&\le \sum_k \sum_s n^{-b}=o(1).
\end{align*}

Additionally, for a fixed $X$ and $Y$ with $|X|=s$ and $|Y|=b$ Theorem \ref{ld} gives
\begin{align*}
    \pr(|\nabla(X, Y) |> .1asp) &\le  \exp \left[ -.1asp \log \left[ \frac{ a}{10eb}\right]\right]
\end{align*}
Note that since $b<10^{-3}n$ it is easy to check that $$\log\left( \frac{a}{10eb}\right)>3.6.$$
Therefore we have
\begin{align*}
    \sum_k \sum_s \sum_{S:|S|=s} \pr(|\nabla(X, Y)| >.1 asp)& < \sum_k \sum_s \exp \left[s \left[\log en -.36 ap \right]\right] \\
    &<\sum_k\sum_s n^{-1.5s}=o(1).
\end{align*}

Hence, we again have w.h.p.\ that our bad events do not occur. Thus, we can again find some $M \subseteq B$ with $|M|=s$ such that $|\nabla(A_1, M)|>|\nabla(S,B)|$, ensuring that $\Pi'$ is larger than $\Pi$.

\end{proof}
\section{Conclusion}\label{final}

We first note a second regime where $G_{n,p}$ has the EG Property, which immediately follows from the two theorems below (see e.g \cite[Theorem 3.1.16]{W} and \cite{JLR}). Recall that $\tau(G)$ is the (vertex) cover number. 

\begin{thm}\label{Konig}(K\H{o}nig's Theorem)
If $G$ is a bipartite graph then $\nu(G)= \tau(G)$. 

\end{thm}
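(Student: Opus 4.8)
The plan is to establish the two inequalities $\nu(G)\le\tau(G)$ and $\tau(G)\le\nu(G)$ separately. The first is immediate and uses no bipartiteness: if $M$ is any matching and $W$ any vertex cover, then each edge of $M$ has an endpoint in $W$, and since the edges of $M$ are pairwise disjoint these endpoints are distinct, so $|W|\ge|M|$; taking $M$ maximum and $W$ minimum gives $\tau(G)\ge\nu(G)$. All the content is in the reverse inequality, and the plan is to prove it by exhibiting a vertex cover of size exactly $\nu(G)$ built from a maximum matching together with the theory of alternating paths (equivalently, from Berge's criterion that a matching is maximum precisely when it admits no augmenting path).

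Concretely, write $V(G)=X\cup Y$ with both parts independent, fix a maximum matching $M$, let $U\subseteq X$ be the set of vertices missed by $M$, and let $Z$ be the set of all vertices reachable from $U$ by an $M$-alternating path (edges alternating between $E(G)\setminus M$ and $M$, beginning with a non-matching edge; the length-$0$ path shows $U\subseteq Z$). I would then set
\[
S=(X\setminus Z)\cup(Y\cap Z)
\]
and verify, in this order: (i) $S$ is a vertex cover --- for an edge $xy$ with $x\in X$ and $y\in Y$, if $x\notin Z$ we are done, while if $x\in Z$ a short case check (append $xy$ to an alternating path reaching $x$, or note that $y$ already lies on such a path when $xy\in M$) shows $y\in Z$; (ii) every vertex of $S$ is $M$-saturated --- vertices of $X\setminus Z$ are saturated because $U\subseteq Z$, and an unsaturated vertex of $Y\cap Z$ would be the far endpoint of an $M$-augmenting path from $U$, contradicting maximality of $M$; (iii) no edge of $M$ lies inside $S$ --- if $xy\in M$ with $y\in Y\cap Z$ then, as in (i), $x\in Z$, so $x\notin X\setminus Z$. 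By (ii) and (iii) the map sending each vertex of $S$ to its $M$-edge is a well-defined injection into $M$, so $|S|\le|M|=\nu(G)$; together with the easy inequality this forces $\tau(G)\le|S|\le\nu(G)\le\tau(G)$, hence equality.

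The one genuinely substantive point --- and the step I expect to demand the most care --- is item (ii): ruling out an unsaturated vertex of $Y\cap Z$ by assembling an $M$-augmenting path and invoking Berge's theorem (whose own proof, taking the symmetric difference of $M$ with a strictly larger matching and extracting an augmenting component, I would include if a fully self-contained treatment were wanted). Everything else is bookkeeping about where alternating paths can travel in a bipartite graph. An alternative that sidesteps alternating paths altogether is to write $\nu(G)$ and $\tau(G)$ as the optima of a primal--dual pair of linear programs over the (totally unimodular) incidence matrix of $G$, so that LP duality together with integrality of the relevant polytopes yields the equality; equivalently, orient every edge from $X$ to $Y$, adjoin a unit-capacity source and sink, and read the result off from max-flow--min-cut. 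I would present the alternating-path argument as the primary proof because it is constructive and self-contained, but remark that the LP and flow formulations make transparent that bipartiteness is exactly the hypothesis doing the work.
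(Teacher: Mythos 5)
Your argument is correct, but it is worth pointing out that the paper does not prove this theorem at all: K\H{o}nig's theorem is stated purely as background, with a citation to a standard textbook, and is then applied (together with the fact that $G_{n,p}$ is w.h.p.\ a forest for $p\ll 1/n$) to conclude that sparse $G_{n,p}$ has the EG Property. So there is no paper proof to compare against. What you have written is the classical alternating-path proof: the easy inequality $\nu\le\tau$, then, for a maximum matching $M$, the set $Z$ of vertices reachable from the $M$-unsaturated vertices of $X$ by $M$-alternating paths, and the cover $S=(X\setminus Z)\cup(Y\cap Z)$ of size at most $|M|$. Your three verifications are all in order; in particular the injection in your step (iii) is justified because the extension of an alternating path reaching $y\in Y\cap Z$ by the matching edge $yx$ shows $x\in Z$, so $x\notin X\setminus Z$. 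The one stylistic caveat is that you lean on Berge's theorem in (ii); since the only thing you use is that a maximum matching admits no augmenting path (the easy direction of Berge), you could state that directly rather than invoking the full equivalence, which keeps the argument self-contained without needing the symmetric-difference analysis you flag as potentially demanding. Your closing remark about the LP/max-flow formulations is accurate and a reasonable alternative, though again orthogonal to anything the paper does, since the paper simply uses the theorem as a black box.
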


\begin{thm}\label{for}
If $p \ll 1/n$ then w.h.p. $G_{n,p}$ is a forest. 
\end{thm}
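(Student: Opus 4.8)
## Proof Proposal for Theorem \ref{for}

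The plan is to use a first-moment (union bound) argument over all potential cycles. A graph is a forest precisely when it contains no cycle, so it suffices to show that with high probability $G_{n,p}$ contains no cycle of any length $\ell$ with $3 \le \ell \le n$. For each fixed length $\ell$, a cycle on a specified set of $\ell$ vertices (in a specified cyclic order) appears in $G_{n,p}$ with probability $p^\ell$, and the number of potential $\ell$-cycles is $\binom{n}{\ell}\frac{(\ell-1)!}{2} \le \frac{n^\ell}{2\ell}$. So the expected number of $\ell$-cycles is at most $\frac{(np)^\ell}{2\ell}$.

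The key step is then to sum this over all $\ell$. Writing $np = \omega(1)^{-1}$, i.e. $np \to 0$ since $p \ll 1/n$, we get
\begin{equation*}
\E[\#\text{cycles in } G_{n,p}] \le \sum_{\ell=3}^{n} \frac{(np)^\ell}{2\ell} \le \frac{1}{6}\sum_{\ell \ge 3} (np)^\ell = \frac{(np)^3}{6(1-np)} = o(1),
\end{equation*}
using that $np < 1$ eventually so the geometric series converges. By Markov's inequality, the probability that $G_{n,p}$ contains at least one cycle is $o(1)$, so w.h.p.\ $G_{n,p}$ is acyclic, hence a forest.

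I do not anticipate a genuine obstacle here; this is a standard threshold computation and the only point requiring minor care is making the geometric-series tail bound rigorous (which just needs $np$ bounded away from $1$, guaranteed for $n$ large by $p \ll 1/n$). An alternative, if one prefers to quote a black box, is to invoke the well-known fact that $1/n$ is the threshold for the appearance of cycles in $G_{n,p}$; but the self-contained first-moment argument above is short enough that I would include it directly.
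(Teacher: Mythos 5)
Your first-moment argument is correct and complete: the count of potential $\ell$-cycles, the bound $\binom{n}{\ell}\frac{(\ell-1)!}{2}p^\ell \le \frac{(np)^\ell}{2\ell}$, the geometric-series summation using $np \to 0$, and the final Markov step are all standard and sound. One small point worth making explicit is that the inequality $\frac{1}{2\ell}\le\frac16$ requires $\ell\ge3$, which holds since cycles have length at least $3$; you do state the range correctly.

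The paper itself does not actually prove this statement --- it invokes it as a known fact with a citation to standard references (a graph theory text for K\H{o}nig and \cite{JLR} for the random graph fact), so there is no in-paper proof for comparison. Your choice to give the short self-contained first-moment computation rather than cite a black box is a reasonable stylistic difference; it slightly lengthens the exposition but makes the paper more self-contained. Either route is acceptable.
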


\begin{cor}
If $p \ll 1/n$ then w.h.p.  $G_{n,p}$ has the EG Property.
\end{cor}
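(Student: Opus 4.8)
The plan is to reduce the statement to a purely deterministic fact about forests and then apply König's Theorem. By Theorem \ref{for}, w.h.p.\ $G=G_{n,p}$ is a forest, so it suffices to show that \emph{every} forest has the EG Property; in fact I would prove the stronger statement that in a forest $G$, for each $k\le\nu(G)$ the (unique) largest subgraph with matching number $k$ is of form (\ref{form2}). The key point I would use is that a subgraph of a forest is again a forest, hence bipartite, so König's Theorem (Theorem \ref{Konig}) is available to each subgraph we consider.

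Concretely, fix a forest $G$ on vertex set $V$ and fix $k\le\nu(G)$. First note that a largest subgraph $H\subseteq G$ with $\nu(H)=k$ is well defined: the maximum is taken over a nonempty finite family, nonempty because a matching of size $k$ in $G$ (which exists since $k\le\nu(G)$) is such a subgraph. Since $H\subseteq G$ and $G$ is a forest, $H$ is a forest, hence bipartite, so Theorem \ref{Konig} gives a vertex cover $T$ of $H$ with $|T|=\tau(H)=\nu(H)=k$. Let $F$ be the set of edges of $G$ meeting $T$; this is a subgraph of $G$ of form (\ref{form2}). Then $H\subseteq F$, and $T$ is a vertex cover of $F$, so $\nu(F)\le\tau(F)\le|T|=k$, while $\nu(F)\ge\nu(H)=k$; hence $\nu(F)=k$. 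Thus $F$ is a subgraph of $G$ with matching number $k$, so by maximality of $H$ we have $|F|\le|H|$, and since $H\subseteq F$ this forces $H=F$. Therefore every largest subgraph of $G$ with matching number $k$ equals $F$ and so is of form (\ref{form2}), which establishes the EG Property for $G$, and hence (combining with Theorem \ref{for}) for $G_{n,p}$ when $p\ll 1/n$.

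There is no real obstacle here — this is why the corollary ``immediately follows'' — the only content being König's Theorem together with the hereditary property of forests. The one place to be slightly careful is bookkeeping in the degenerate case $k=0$: there $T=\emptyset$ and $H=F=\emptyset$, which is (vacuously) of form (\ref{form2}), so the argument goes through unchanged.
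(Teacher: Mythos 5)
Your proof is correct and takes essentially the same route as the paper's: reduce via Theorem \ref{for} to the deterministic claim that a forest has the EG Property, then apply K\H{o}nig's Theorem (Theorem \ref{Konig}) to each subgraph (bipartite, being a forest) to conclude that the largest subgraph with matching number $k$ is of form (\ref{form2}). You have merely spelled out the bookkeeping --- the vertex cover $T$, the graph $F$ of edges of $G$ meeting $T$, and the maximality argument forcing $H=F$ --- that the paper leaves implicit.
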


\begin{proof}
Assume $p \ll 1/n$. By Theorem \ref{for} we know w.h.p. $G_{n,p}$ is a forest. Assuming $G_{n,p}$ is a forest we know by Theorem \ref{Konig} $\nu(H)= \tau(H)$ for all subgraphs $H$ of $G_{n,p}$. Thus for a given $k$ every largest subgraph of $G_{n,p}$ with matching number $k$ is the set of edges incident to a set of $k$ vertices. Hence $G$ has the EG Property. 
\end{proof}

We conclude by noting one regime where w.h.p. $G_{n,p}$ does not have the EG Property.

\begin{thm}\label{fails}
If $\frac{4\log(2e)}{n} < p< \frac{\log n}{3n}$, then w.h.p.\ $G_{n,p}$ does not have the EG Property.
\end{thm}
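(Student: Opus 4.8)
The plan is to exhibit, w.h.p., a single witness against the EG Property: a value of $k$ together with a subgraph of $G=G_{n,p}$ of matching number $k$ that is strictly larger than both of the candidate extremal configurations \eqref{form1} and \eqref{form2}. The natural candidate to beat both forms is a decomposition $\Pi$ of the type \eqref{config} with $S=\emptyset$, several small odd components, and one giant component $A_1$ that is not all of $V$ but also not of size $2k+1$ with the rest singletons --- in other words a $\Pi$ with $S=\emptyset$, $d(\Pi)$ odd components, and a modest amount of excess $y(\Pi)>0$ spread over triangle-free-sized blocks. Concretely I would take $A_1$ a large set, a bounded-away-from-zero fraction of the odd components being (say) $3$-vertex blocks, and the rest singletons, with the counts chosen so that $r(\Pi)=d(\Pi)=n-2k$ (since $S=\emptyset$). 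The key point is that in this sparse regime the number of edges \emph{inside} a small block of constant size is typically $0$ or $1$, whereas moving those extra vertices up to $A_1$ (to reach form \eqref{form1}) or down into a cover (form \eqref{form2}) costs or fails to gain enough, so $\Pi$ genuinely wins.

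The key steps, in order. (1) Fix the target $k$ and the shape of $\Pi$ precisely: $S=\emptyset$, $A_1$ of some size $a$, $t$ odd blocks of size $3$, and the remaining $n-2k-1-t$ blocks singletons, with $a+3t+(\text{singletons})=n$ and $d=1+t+(\text{singletons})=n-2k$; solve for $a,t$ in terms of $k$, choosing $k$ so that $a=\Theta(n)$ and $t=\Theta(n)$. (2) Compute $|\Pi|=|E(A_1)|+\sum|E(A_i)|$. In the regime $p<\tfrac{\log n}{3n}$ the giant $A_1$ contributes $\sim\binom a2 p$ edges (concentration via Chernoff, Theorem \ref{cher}, is fine since $\binom a2 p=\Theta(np)=\omega(\log n)$ --- no union bound needed as $A_1$ is a \emph{fixed}-size set we get to choose as the densest $a$-subset, exactly as in Case 7), while each $3$-block contributes at most $3$ edges, so $\sum_{i\ge2}|E(A_i)|\le 3t$. (3) Compute the sizes of the two rival forms with the same $k$: form \eqref{form2} has $\le \nabla(\text{$k$ vertices}, V)\le$ roughly $knp$ edges; form \eqref{form1} has $\binom{2k+1}{2}p(1+o(1))$ edges (again a fixed set, concentrated). (4) Show that for a suitable choice of $k$ in the allowed range, $|E(A_1)|+3t$ provably exceeds \emph{both} of these w.h.p.; the point is that reshaping $\Pi$ to form \eqref{form1} would force us to either enlarge $A_1$ (but then $k$ changes) or shrink it and make singletons (losing $\Theta(n)$ edges from $E(A_1)$ while only being ``allowed'' to have created $O(t)$ singleton-to-$A_1$ edges, which is a losing trade here because $p=o(\log n/n)$ makes the $E(A_1)$ density too low to matter relative to... ) --- more cleanly, I would just directly compare the three edge counts as functions of $k$ and check that near an appropriately chosen $k$ (e.g. with $a$ slightly below $n$ so that $k$ is near $n/2$) the middle configuration strictly dominates. (5) Since everything in step (2)--(4) involves only finitely many fixed sets (one choice of $A_1$ per $k$, and we only need \emph{one} value of $k$), Chernoff with no union bound over $B$ suffices, and a union bound over the single relevant $k$ (or over $O(n)$ values of $k$ to be safe) is harmless.

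The main obstacle I expect is \textbf{step (4): getting the arithmetic to actually separate the three quantities for some $k$ inside the narrow window $\tfrac{4\log(2e)}{n}<p<\tfrac{\log n}{3n}$}, and in particular being careful about what ``same matching number'' forces. One has to be sure the intermediate $\Pi$ is not secretly dominated by form \eqref{form1} with the same $k$ --- this is exactly where the lower bound $p>\tfrac{4\log(2e)}{n}$ must be used, presumably to guarantee that $A_1$ (and hence the \eqref{form1} configuration, if one tried to use it) is large enough, or conversely that the small odd components really do accumulate enough internal edges ($\Theta(n)$ of them, one per many triangles/edges) that form \eqref{form1} with a \emph{bigger} $A_1$ and correspondingly different $k$ is irrelevant to the comparison at our fixed $k$. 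A secondary subtlety: one must verify $\Pi$ is a \emph{legal} Tutte--Berge-type decomposition with $d-|S|=n-2k\ge 0$ and all block sizes odd, i.e. that the chosen $a,t$ are integers of the right parity; this is just bookkeeping but needs $2k+1\equiv a\pmod 2$ etc. Once the numeric separation is pinned down, the probabilistic content is light --- two or three applications of Theorem \ref{cher} to fixed sets --- so the write-up should be short.
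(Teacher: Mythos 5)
Your plan is to find an intermediate $k$ and a decomposition $\Pi$ (with $S=\emptyset$, one large $A_1$, and $\Theta(n)$ odd $3$-blocks) that beats both extremal forms. This approach does not work, and the obstacle you flagged in step (4) is in fact fatal. Keeping $S=\emptyset$ and $d(\Pi)=n-2k$ fixed, if you have $t$ blocks of size $3$ then $|A_1|=2k+1-2t$, so compared to the form-\eqref{form1} configuration (all edges inside a $(2k+1)$-set) you have removed $2t$ vertices from the big block and repackaged vertices into $t$ triples. Removing those $2t$ vertices from a $\Theta(n)$-sized block costs roughly $2t\cdot|A_1|\cdot p=\Theta(tnp)$ edges, while each $3$-block contributes at most $3$ edges, for a total gain of at most $3t$. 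In the stated range $np>4\log(2e)>3$, so $3t<tnp$ and the form-\eqref{form1} configuration strictly dominates your $\Pi$. The same obstruction persists for blocks of any bounded odd size, since a block of size $m$ contributes at most $\binom m2$ edges but displaces $m-1$ vertices from $A_1$ at a cost of order $(m-1)np$, and $np$ exceeds $\binom m2/(m-1)$ for small $m$ in this range. So there is no choice of $k$ and block sizes that makes your comparison come out right; the lower bound on $p$ you hoped would help you is exactly what kills the construction.

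The paper avoids edge-counting at intermediate $k$ entirely by taking $k=\nu(G_{n,p})$, where the unique largest subgraph with matching number $k$ is $G$ itself, so the EG Property at that $k$ reduces to a structural statement about $G$: either (a) all edges of $G$ lie inside some $(2k+1)$-set, or (b) $\tau(G)=\nu(G)$. Both are refuted by soft structural facts that hold w.h.p.\ in this sparse regime. Proposition~\ref{P3} (a second-moment argument) gives two vertex-disjoint isolated $P_3$'s; deleting them drops the matching number to $k-2$, and since any set containing all edges of a graph with matching number $k-2$ has at least $2(k-2)$ vertices, adding back the $6$ vertices of the two $P_3$'s forces at least $2k+2>2k+1$ vertices, ruling out (a). Proposition~\ref{nonempty} (this is where $p>4\log(2e)/n$ is actually used) shows every $(n/2)$-set spans an edge, hence every independent set has size less than $n/2$, hence $\tau(G)>n/2\ge\nu(G)$, ruling out (b). You would need to discard your current plan and look at the extremal $k$, or else supply a genuinely new idea for how an intermediate-$k$ witness could overcome the edge-count deficit identified above.
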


This is based on the following preliminaries:

\begin{prop}\label{P3}
For $p$ as in Theorem \ref{fails} w.h.p. $G_{n,p}$ contains at least two isolated $P_3$'s ($P_3$ is a path on 3 vertices).  
\end{prop}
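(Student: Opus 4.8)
The plan is to apply the second moment method to $X$, the number of isolated copies of $P_3$ in $G_{n,p}$; here an \emph{isolated $P_3$} is a set of three vertices inducing a path and joined to the rest of $G$ by no edge. There are $3\binom n3=\tfrac12 n(n-1)(n-2)$ potential copies (a $P_3$ on a fixed triple is determined by its center), each present as an isolated subgraph with probability $p^2(1-p)(1-p)^{3(n-3)}=p^2(1-p)^{3n-8}$ (two path edges present, the third pair a non-edge, and all $3(n-3)$ edges to the outside absent), so
\[
\E[X]=\tfrac12 n(n-1)(n-2)\,p^2(1-p)^{3n-8}.
\]
Since $p=O(\log n/n)$ we have $(1-p)^{3n-8}=(1+o(1))e^{-3np}$, giving $\E[X]=(1+o(1))\tfrac12 n(np)^2 e^{-3np}$. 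The first step is to check this tends to infinity throughout the range $\frac{4\log(2e)}{n}<p<\frac{\log n}{3n}$. Writing $t=np$, one has $\log\E[X]=\log n-3t+2\log t+O(1)$, where $\log n-3t>0$ by the upper bound on $p$ and $2\log t>2\log(4\log(2e))>0$ by the lower bound on $p$; if $np$ has a bounded subsequence then $\log n-3np\to\infty$ along it, while along any subsequence with $np\to\infty$ the term $2\log(np)\to\infty$, so in every case $\E[X]\to\infty$.

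Next I would record the structural point that keeps the second moment clean: an isolated $P_3$ is a connected component of $G$, so two distinct isolated $P_3$'s must have disjoint vertex sets (a shared vertex would force the two components, hence the two vertex sets, to coincide, and then the induced subgraph on that triple determines the copy). Consequently
\[
\E[X^2]=\E[X]+\sum_{\substack{P\ne Q\\ V(P)\cap V(Q)=\emptyset}}\pr(P,Q\text{ both isolated }P_3),
\]
the sum running over ordered pairs. For vertex-disjoint $P,Q$ the two isolation events share only the $9$ potential edges between $V(P)$ and $V(Q)$, so
\[
\pr(P,Q\text{ both isolated})=p^4(1-p)^2(1-p)^{6n-27}=(1-p)^{-9}\,\pr(P\text{ isolated})\,\pr(Q\text{ isolated}),
\]
and $(1-p)^{-9}=1+o(1)$. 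Bounding the number of ordered disjoint pairs by $\big(\tfrac12 n(n-1)(n-2)\big)^2$, the sum is at most $(1+o(1))\E[X]^2$, whence $\operatorname{Var}[X]\le\E[X]+o(\E[X]^2)=o(\E[X]^2)$ since $\E[X]\to\infty$.

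Finally, Chebyshev's inequality gives $\pr(X\le 1)\le\operatorname{Var}[X]/(\E[X]-1)^2=o(1)$, so w.h.p.\ $X\ge 2$, which is the assertion of the proposition. The only genuinely delicate point is the first step — confirming $\E[X]\to\infty$ uniformly over the stated window of $p$, which straddles the regime $p=\Theta(1/n)$ (where $\E[X]=\Theta(n)$) and the regime $p=\Theta(\log n/n)$ (where $ne^{-3np}$ may be only a constant but $(np)^2\to\infty$ rescues the bound); everything else is a routine second-moment calculation, made short by the disjointness observation.
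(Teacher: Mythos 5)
Your proposal is correct and is essentially the paper's proof: you compute the same first and second moments (the paper writes $\E X^2 = \E X + 9\binom{n}{3}\binom{n-3}{3}p^4(1-p)^{6n-25}$, relying on exactly your observation that distinct isolated $P_3$'s are vertex-disjoint), conclude $\E X^2/(\E X)^2 \to 1$, and finish with Chebyshev. The only difference is that you spell out why $\E X \to \infty$ across the window of $p$, a step the paper simply asserts as easy to check.
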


Proposition $\ref{P3}$ is proved using a basic second moment method argument. 

\begin{thm}\label{CI}(Chebyshev's Inequality (see e.g. \cite[Theorem 4.1.1]{AS}))
Let $X$ be a random variable with expectation $\mu$ and standard deviation $\sigma$. For any $\lambda>0$ $$\pr(|X - \mu|\ge \lambda \sigma) < \frac{1}{\lambda^2}.$$

\end{thm}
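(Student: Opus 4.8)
The plan is to deduce this from Markov's inequality, so first I would recall (and prove in one line) that for any nonnegative random variable $Y$ and any $a>0$ one has $\pr(Y\ge a)\le \E[Y]/a$. This is immediate from $\E[Y]\ge \E[Y\mathbf{1}_{\{Y\ge a\}}]\ge a\,\pr(Y\ge a)$, where the first inequality uses nonnegativity of $Y$ and the second uses that $Y\ge a$ on the event $\{Y\ge a\}$.

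Next I would apply this with $Y=(X-\mu)^2$, which is nonnegative and satisfies $\E[Y]=\mathrm{Var}(X)=\sigma^2$ by the definition of the standard deviation. Taking $a=\lambda^2\sigma^2$ (which is positive when $\sigma>0$), Markov's inequality gives $\pr\big((X-\mu)^2\ge \lambda^2\sigma^2\big)\le \sigma^2/(\lambda^2\sigma^2)=1/\lambda^2$. Since $(X-\mu)^2\ge \lambda^2\sigma^2$ holds precisely when $|X-\mu|\ge \lambda\sigma$, this is exactly $\pr(|X-\mu|\ge \lambda\sigma)\le 1/\lambda^2$.

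To recover the strict inequality as stated, I would refine the Markov step by writing $\sigma^2=\E\big[(X-\mu)^2\mathbf{1}_{\{|X-\mu|\ge\lambda\sigma\}}\big]+\E\big[(X-\mu)^2\mathbf{1}_{\{|X-\mu|<\lambda\sigma\}}\big]$ and bounding the first term below by $\lambda^2\sigma^2\,\pr(|X-\mu|\ge\lambda\sigma)$; this forces a strict inequality except in a degenerate situation (the second term vanishing while $(X-\mu)^2$ is almost surely $\lambda^2\sigma^2$ on the first event), and one also handles the trivial case $\sigma=0$ separately (where $X=\mu$ almost surely). There is no real obstacle here — the entire content is the choice $Y=(X-\mu)^2$ in Markov's inequality, and the cited reference \cite[Theorem 4.1.1]{AS} supplies the precise form used.
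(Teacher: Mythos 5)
The paper offers no proof of this statement at all: it is quoted as a standard result with a citation to Alon--Spencer, so there is nothing internal to compare against. Your argument is the canonical one --- apply Markov's inequality to the nonnegative variable $Y=(X-\mu)^2$ with threshold $a=\lambda^2\sigma^2$ --- and it correctly yields $\pr(|X-\mu|\ge\lambda\sigma)\le 1/\lambda^2$, which is all the paper ever uses (in the proof of Proposition \ref{P3} the second-moment bound only needs the non-strict form).

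One caveat: the strict inequality as transcribed in the statement cannot be ``recovered'' in full generality, because it is simply false in the degenerate cases you gesture at. For example, if $X$ takes the values $\pm 1$ each with probability $1/2$ and $\lambda=1$, then $\mu=0$, $\sigma=1$, and $\pr(|X-\mu|\ge\lambda\sigma)=1=1/\lambda^2$, so equality holds. Your refinement correctly isolates when equality occurs (all the mass of $(X-\mu)^2$ on the event $\{|X-\mu|\ge\lambda\sigma\}$ sits exactly at $\lambda^2\sigma^2$ and the complementary term vanishes), but that case is genuinely possible, so the honest conclusion is the $\le$ version --- which matches the standard statement in the cited reference and suffices for every use in this paper. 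Apart from that cosmetic discrepancy in the statement itself, your proof is complete and correct.
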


\begin{proof}[Proof of Proposition \ref{P3}]
Let $p$ be as in Proposition \ref{P3} and let $X$ be the number of isolated $P_3$'s in $G$. We have $$\E X = 3 {n \choose 3} p^2 (1-p)^{3n-8}.$$ Note that for our values of $p$ we have $\E X \rightarrow \infty$. Furthermore, 
\begin{equation*}
\E X^2 = \E X + 9 {n \choose 3} {n-3 \choose 3}p^4 (1-p)^{6n-25}.
\end{equation*} This is because if $X_i$ and $X_j$ are both indicators of isolated $P_3$'s then $X_iX_j$ is always zero if there are some shared vertices and the paths are not identical. 
Thus it is easy to check that 
\begin{align*}
\lim_{n \rightarrow \infty} \frac{\E X^2}{\E^2 X} &= 1
\end{align*}
Therefore, Chebyshev's inequality easily gives the desired result.


\end{proof}

\begin{prop}\label{nonempty}
For $p$ as in Theorem \ref{fails} w.h.p. for all $X \subseteq V(G_{n,p})$ with $|X|= n/2$ we have $E(X)$ is non-empty.

\end{prop}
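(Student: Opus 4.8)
The plan is to prove Proposition~\ref{nonempty} by a straightforward first-moment (union bound) argument over all $n/2$-subsets, exactly in the spirit of the ``no empty cut'' estimate already used in the proof of Theorem~\ref{main}. Fix a set $X \subseteq V(G_{n,p})$ with $|X| = n/2$. The event that $E(X)$ is empty is the event that none of the $\binom{n/2}{2}$ pairs inside $X$ is an edge, so
\[
\pr(E(X) = \emptyset) = (1-p)^{\binom{n/2}{2}} \le \exp\!\left[-p\binom{n/2}{2}\right] \le \exp\!\left[-(1-o(1))\tfrac{pn^2}{8}\right].
\]
Then I would take the union bound over the $\binom{n}{n/2} \le 2^n = \exp[n\log 2]$ choices of $X$, giving
\[
\pr(\exists X,\ |X| = n/2,\ E(X) = \emptyset) \le \exp\!\left[n\log 2 - (1-o(1))\tfrac{pn^2}{8}\right].
\]

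The key step is then to check that the exponent tends to $-\infty$, which is where the hypothesis $p > \frac{4\log(2e)}{n}$ from Theorem~\ref{fails} is used (this is precisely why that somewhat unusual-looking constant appears). With $p > \frac{4\log(2e)}{n}$ we get $\frac{pn^2}{8} > \frac{n\log(2e)}{2} = \frac{n(\log 2 + 1)}{2} > n\log 2$ for large $n$, with room to spare, so the bracketed quantity is at most $-cn$ for some constant $c > 0$ and the probability is $o(1)$. (One should be slightly careful that $n/2$ is an integer — assume $n$ even, or replace $n/2$ by $\lceil n/2\rceil$ throughout; this changes nothing.) I do not expect any genuine obstacle here: the only thing to watch is bookkeeping of the constant, making sure the $\binom{n/2}{2} \sim n^2/8$ asymptotics and the $\binom{n}{n/2} \le 2^n$ bound combine so that the linear-in-$n$ gain in the exponent strictly beats the $n\log 2$ loss, which it does under the stated lower bound on $p$.

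As a remark, the same computation shows more is true — for any fixed $\alpha \in (0,1)$, w.h.p.\ every $\alpha n$-subset spans an edge, provided $p \gg 1/n$ with a large enough implied constant depending on $\alpha$ — but for the application only the $|X| = n/2$ case with the explicit threshold is needed. This proposition is exactly what is required to rule out form~\eqref{form1} as the structure of certain extremal subgraphs in the proof of Theorem~\ref{fails}: combined with Proposition~\ref{P3}, it forces the extremal configuration into form~\eqref{form2}, which then fails to be optimal because the two isolated $P_3$'s can be exploited to build a strictly larger subgraph with the same matching number.
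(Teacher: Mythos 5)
Your proof is correct and follows essentially the same first-moment union bound argument as the paper. The only cosmetic difference is that you bound $\binom{n}{n/2}$ by $2^n$ while the paper uses $(2e)^{n/2}$; both suffice precisely because $\log 2 < 1$, which is exactly what the threshold $p > 4\log(2e)/n$ was set up to accommodate.
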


\begin{proof}
For any given $X \subseteq V(G_{n,p})$ we have 
\begin{align*}
\pr(|E(X)|=0)&= (1-p)^{|X| \choose 2} \le \exp\left[-p {|X| \choose 2}\right].
\end{align*}
Therefore, the probability that any vertex set of size $|X|=n/2$ has no edges is at most 
\begin{align*}
   {n \choose n/2} \exp\left[-p {n/2 \choose 2}\right] &\le \exp\left[n/2  (\log(2e)- p (n/2-1)/2)\right] =o(1),
\end{align*}
where $p > {4\log(2e)}/n$ gives the final equality.

\end{proof}

\begin{proof}[Proof of Theorem \ref{fails}]
We show w.h.p.\ the EG Property fails when $k = \nu(G_{n,p})$. Let $G=G_{n,p}$. Assuming the conclusions of Propositions \ref{P3} and \ref{nonempty} the remaining argument is deterministic. Clearly the largest subgraph with matching number $k$ is $G$ itself. Thus having the EG Property at $k$ is equivalent to satisfying one of the following two properties
\begin{enumerate}[label=(\alph*)]
\item All edges of $G$ are within a set of vertices of size $2k+1 $ ;\label{a}
\item $\tau(G)= \nu(G)$ .\label{b}
\end{enumerate}
Assuming the conclusion of Proposition \ref{P3} there are two isolated $P_3$'s, say $P'$ and $P''$, in $G$. Note $\nu(G - \{P',P''\})= k-2$. Thus if $X$ is the minimum set of vertices such that all edges of $G - \{P', P''\}$ are contained in $X$ we have $|X| \ge 2(k-2)$. However, to include $P'$ and $P''$ we need 6 more vertices. Thus we need at least $2k+2$ vertices to ensure that every edge in $G$ is included, violating case \ref{a}.\\
Furthermore, by Proposition \ref{nonempty} we have that every set of vertices of size $n/2$ has at least one edge. Thus $\tau(G)>n/2 \ge \nu(G)$, violating case \ref{b}.
\end{proof}

\printbibliography
\end{document}